\newcommand\Id{{\mathds{1}}}
\newcommand\op[1]{\mathop{\rm #1}\nolimits}
\newcommand{\om}{\omega}
\newcommand{\ka}{\kappa}
\newcommand{\ba}{\mathcal{G}}
\newcommand{\fg}{\mathfrak g}
\newcommand{\fp}{\mathfrak p}
\newcommand{\fh}{\mathfrak h}
\newcommand{\na}{\nabla}
\newcommand{\U}{\Upsilon}
\newcommand{\R}{\mathbb{R}}
\newcommand{\ad}{{\rm ad}}
\newcommand{\C}{\mathbb{C}}
\newcommand{\V}{\mathbb{V}}
\newcommand{\p}{\mathfrak{p}}
\newcommand{\End}{\text{End}}
\renewcommand{\H}{\mathbb{H}}
\renewcommand\sp{\mathfrak{sp}}
\renewcommand\sl{\mathfrak{sl}}
\theoremstyle{plain}
\theoremstyle{plain}
\newtheorem{thm*}{Theorem}[section]
\newtheorem{prop*}[thm*]{Proposition}
\newtheorem{lem*}[thm*]{Lemma}
\newtheorem{cor*}[thm*]{Corollary}
\theoremstyle{definition}
\newtheorem{def*}{Definition}
\theoremstyle{remark}
\newtheorem{rem*}{Remark}
\def\sli{\ar@{-}}
\newlength{\lengtharrow}      \setlength{\lengtharrow}{15pt}
\newlength{\lengtharrowhalf}  \setlength{\lengtharrowhalf}{3pt}
\newlength{\lengtharrowS}     \setlength{\lengtharrowS}{10pt}
\newlength{\lengtharrowShalf} \setlength{\lengtharrowShalf}{0pt}
\newlength{\spaceunderA}      \setlength{\spaceunderA}{20pt}
\newlength{\spaceundercoef}   \setlength{\spaceundercoef}{-4pt}
\newlength{\spaceundercoefS}  \setlength{\spaceundercoefS}{7pt}
\newcommand{\Diagram}[7][0pt]{
  \addtolength{\spaceunderA}{#1}
  \parbox[c][\spaceunderA][t]{1in}{\xymatrix@=\lengtharrowS@R=0pt{
                                                                           \\
     *\txt{$\scriptstyle \rule[\spaceundercoef]{0pt}{0mm} #2$}
    &*\txt{$\scriptstyle \rule[\spaceundercoef]{0pt}{0mm} #3$}
    &*\txt{$\scriptstyle \rule[\spaceundercoef]{0pt}{0mm} #4$}
    &*\txt{$\scriptstyle \rule[\spaceundercoef]{0pt}{0mm} #5$}
    &&*\txt{$\scriptstyle \rule[\spaceundercoef]{0pt}{0mm} #6$}
    &*\txt{$\scriptstyle \rule[\spaceundercoef]{0pt}{0mm} #7$}             \\
     *\txt{$\bullet$}       \sli[r]
    &*\txt{$\times$}       \sli[r]
    &*\txt{$\bullet$}       \sli[r]
    &*\txt{$\circ$}       \sli[r]
    &*\txt{$\, \cdots$} \sli[r]
    &*\txt{$\circ$}       \sli[r]
    &*\txt{$\bullet$}
  }}
}
\newcommand{\DiagramReduced}[9][0pt]{
  \addtolength{\spaceunderA}{#1}
  \parbox[c][\spaceunderA][t]{1in}{\xymatrix@=\lengtharrowS@R=0pt{
                                                                           \\
     *\txt{$\scriptstyle \rule[\spaceundercoef]{0pt}{0mm} #2$}
    &*\txt{$\scriptstyle \rule[\spaceundercoef]{0pt}{0mm} #3$}
    &*\txt{$\scriptstyle \rule[\spaceundercoef]{0pt}{0mm} #4$}
    &*\txt{$\scriptstyle \rule[\spaceundercoef]{0pt}{0mm} #5$}
    &*\txt{$\scriptstyle \rule[\spaceundercoef]{0pt}{0mm} #6$}
    &&*\txt{$\scriptstyle \rule[\spaceundercoef]{0pt}{0mm} #7$}
    &*\txt{$\scriptstyle \rule[\spaceundercoef]{0pt}{0mm} #8$}
    &*\txt{$\scriptstyle \rule[\spaceundercoef]{0pt}{0mm} #9$}            \\
     *\txt{$\bullet$}       \sli[r]
    &*\txt{$\times$}       \sli[r]
    &*\txt{$\bullet$}       \sli[r]
    &*\txt{$\times$}       \sli[r]
    &*\txt{$\bullet$}       \sli[r]
    &*\txt{$\, \cdots$}    \sli[r]
    &*\txt{$\bullet$}       \sli[r]
    &*\txt{$\times$}       \sli[r]
    &*\txt{$\bullet$}
  }}
}
\newcommand{\DiagramDisconnected}[9][0pt]{
  \addtolength{\spaceunderA}{#1}
  \parbox[c][\spaceunderA][t]{1in}{\xymatrix@=\lengtharrowS@R=0pt{
                                                                           \\
     *\txt{$\scriptstyle \rule[\spaceundercoef]{0pt}{0mm} #2$}
    &*\txt{$\scriptstyle \rule[\spaceundercoef]{0pt}{0mm} #3$}
    &*\txt{$\scriptstyle \rule[\spaceundercoef]{0pt}{0mm} #4$}
    &*\txt{$\scriptstyle \rule[\spaceundercoef]{0pt}{0mm} #5$}
    &*\txt{$\scriptstyle \rule[\spaceundercoef]{0pt}{0mm} #6$}
    &&*\txt{$\scriptstyle \rule[\spaceundercoef]{0pt}{0mm} #7$}
    &*\txt{$\scriptstyle \rule[\spaceundercoef]{0pt}{0mm} #8$}
    &*\txt{$\scriptstyle \rule[\spaceundercoef]{0pt}{0mm} $}
    &*\txt{$\scriptstyle \rule[\spaceundercoef]{0pt}{0mm} #9$}            \\
     *\txt{$\bullet$}       \sli[]
    &*\txt{\ \ \ \ }       \sli[]
    &*\txt{$\bullet$}       \sli[]
    &*\txt{\ \ \ \ }       \sli[]
    &*\txt{$\bullet$}       \sli[]
    &*\txt{\ \ \ \ }    \sli[]
    &*\txt{$\bullet$}       \sli[r]
    &*\txt{$\circ$}       \sli[r]
    &*\txt{$\, \cdots$}     \sli[r]
    &*\txt{$\bullet$}
  }}
}
\newcommand{\DiagramNoX}{
\xymatrix{
     *\txt{$\bullet$}       \sli[]
    &*\txt{\ \ \ \ }       \sli[]
    &*\txt{$\bullet$}       \sli[r]
    &*\txt{$\circ$}       \sli[r]
    &*\txt{$\bullet$}       \sli[r]
    &*\txt{$\, \cdots$} \sli[r]
    &*\txt{$\circ$}       \sli[r]
    &*\txt{$\bullet$}
}}
\newcommand{\DiagramNoCoef}{
\xymatrix{
     *\txt{$\bullet$}       \sli[r]
    &*\txt{$\times$}       \sli[r]
    &*\txt{$\bullet$}       \sli[r]
    &*\txt{$\circ$}       \sli[r]
    &*\txt{$\bullet$}       \sli[r]
    &*\txt{$\, \cdots$} \sli[r]
    &*\txt{$\circ$}       \sli[r]
    &*\txt{$\bullet$}
}}
\begin{document}
\title[Submaximally Symmetric Almost Quaternionic Structures]{Submaximally Symmetric\\
Almost Quaternionic Structures}
\author[Boris Kruglikov, Henrik Winther and Lenka Zalabov\'{a}]{Boris Kruglikov$^\dag$, Henrik Winther$^\dag$ and Lenka Zalabov\'{a}$^\ddag$}
\address{$\dag$ Institute of Mathematics and Statistics, University of Troms\o, Troms\o\ 90-37, Norway.
E-mails: \textsc{boris.kruglikov@uit.no}, \textsc{henrik.winther@uit.no}.}
\address{$\ddag$ Institute of Mathematics and Biomathematics, Faculty of Science,
 University of South Bohemia in \v{C}esk\'{e} Bud\v{e}jovice, Brani\v{s}ovsk\'{a} 1760, \v Cesk\'{e} Bud\v{e}jovice, 370 05, Czech Republic.
E-mail: \textsc{lzalabova@gmail.com}.}
 \thanks{}

\maketitle
\begin{abstract}
The symmetry dimension of a geometric structure is the dimension of its symmetry algebra. We investigate
symmetries of almost quaternionic structures of quaternionic dimension $n$. The maximal possible
symmetry is realized by the quaternionic projective space $\H P^n$, which is flat and has the
symmetry algebra $\frak{sl}(n+1,\H)$ of dimension $4n^2+8n+3$. For non-flat almost quaternionic manifolds we
compute the next biggest (submaximal) symmetry dimension.
We show that it is equal to $4n^2-4n+9$ for $n>1$ (it is equal to 8 for $n=1$).
This is realized both by a quaternionic structure (torsion--free) and by an almost quaternionic structure with vanishing quaternionic Weyl curvature.
\end{abstract}


 \section{Introduction}
An \emph{almost quaternionic structure} on a manifold $M$ is a rank three subbundle $Q \subset \End(TM)$ such that locally (in a neighbourhood of each point) we can find a basis $I,J,K$ of $Q$ with $I^2=J^2=K^2=-\Id$ and $IJ=K$.
A manifold $M$ with a fixed almost quaternionic structure $Q$ is called an \emph{almost quaternionic manifold}. A (local) automorphism of $(M,Q)$ is a (local) diffeomorphism of $M$ that preserves $Q$. There exists a class of the so--called \emph{Oproiu connections} $[\na^{Op}]$ on $(M,Q)$ that preserve $Q$ and share the same minimal torsion $T_\nabla$, which equals to the structure torsion of $Q$ \cite{Alekseevsky}. If $\na^{Op}$ is torsion--free, then $(M,Q)$ is a \emph{quaternionic manifold}.

An almost quaternionic manifold $(M,Q)$ can be equivalently described as a normal parabolic geometry $(\ba \to M, \om)$ of type $PGL(n+1,\H)/P$, where $P$ is the stabilizer of a quaternionic line in $\H^{n+1}$ \cite{parabook}. The fundamental invariant of each parabolic geometry is its \emph{harmonic curvature} $\ka_H$, which has two components in the almost quaternionic case: the \emph{torsion} $\ka_1$ of homogeneity $1$ and the \emph{quaternionic Weyl curvature} $\ka_2$ of homogeneity $2$. In particular, $\ka_1$ coincides with the torsion $T_\nabla$ of arbitrary $\na^{Op}$ and vanishes for quaternionic geometries.

The quaternionic projective space $\H P^n$ is the set of quaternionic lines in $\H^{n+1}$, and the group $PGL(n+1,\H)$ acts transitively on $\H P^n$ as automorphisms of the natural quaternionic structure. We can view $P$ as (the quotient of) the stabilizer of the first basis vector of $\H^{n+1}$.  Then, $\H P^n=PGL(n+1,\H)/P$ is the \emph{flat model} of (almost) quaternionic geometry. The flat model has vanishing harmonic curvature and conversely, each (almost) quaternionic geometry such that $\ka_H \equiv 0$ is locally equivalent to the flat model. In particular, every local automorphism of $\H P^n$ uniquely extends to a global one, and it is exactly the left multiplication by an element of $PGL(n+1,\H)$. The space $\H P^n$ has maximal possible dimension of the symmetry algebra among all (almost) quaternionic manifolds with fixed quaternionic dimension $n$, that is $\dim\frak{sl}(n+1,\H)=4(n+1)^2-1$ for $\dim M=4n$.

For curved almost quaternionic structures, local automorphisms generally do not extend to global ones.  Therefore we consider infinitesimal symmetries, which correspond to local automorphisms. We focus on the problem of establishing
the \emph{submaximal symmetry dimension}, i.e.\ the maximal dimension of the symmetry algebra of  an almost
quaternionic structure with $\ka_H \not\equiv 0$ and fixed quaternionic dimension. Specifically, we answer the following question:
 \begin{center}
{\it When an almost quaternionic manifold $(M^{4n},Q)$ is not everywhere flat, what \\
is the maximal dimension $\mathfrak{S}$ of its Lie algebra of infinitesimal symmetries?}
 \end{center}

 \begin{rem*}
The submaximal dimension of the automorphism groups (without the requirement $\ka_H \not\equiv 0$) is
$\dim P=4n^2+4n+3$. This is achieved on the flat manifold $M=\H P^n\setminus\{p\}$ for some $p \in \H P^n$.
However the symmetry algebra of this $(M,Q)$ is of maximal dimension $4n^2+8n+3$.
 \end{rem*}

From the point of view of parabolic geometry, a model with the symmetry algebra of submaximal dimension
typically has exactly one non-zero component of its harmonic curvature \cite{KT}. Sometimes, the same submaximal bound is achieved for different non-zero components of $\ka_H$. We will show that this is the case with almost quaternionic structures. Our main result is the following.

\begin{thm*}\label{mainthm}
The maximal dimension of the symmetry algebra of almost quaternionic structures $(M,Q)$ with $\dim M=4n>4$ and
$\ka_H=(\kappa_1,\kappa_2)\not\equiv 0  $ is
 $$
\mathfrak{S}=4n^2-4n+9.
 $$
This is realized in both cases, when $\kappa_1\equiv0$ and when $\kappa_2\equiv0$.
\end{thm*}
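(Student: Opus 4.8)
The plan is to run the standard parabolic-geometry machinery of Kruglikov--The. First I would realize $(M,Q)$ as a regular normal parabolic geometry $(\ba\to M,\om)$ of type $(\fg,\fp)$ with $\fg=\sl(n+1,\H)$ and $\fp$ the $|1|$-graded parabolic stabilizing a quaternionic line, so that $\fg=\fg_{-1}\oplus\fg_0\oplus\fg_1$ with $\dim\fg_{\pm1}=4n$ and $\fg_0=\R\oplus\sl(n,\H)\oplus\sp(1)$ (total dimension $4n^2+3$). By Kostant's theorem the harmonic curvature module $H^2_+(\fg_-,\fg)$ decomposes into exactly the two $\fg_0$-irreducible pieces already named in the text: $W_1\ni\ka_1$ in homogeneity $1$ (a summand of $\Lambda^2\fg_{-1}^*\otimes\fg_{-1}$) and $W_2\ni\ka_2$ in homogeneity $2$ (a summand of $\Lambda^2\fg_{-1}^*\otimes\fg_0$). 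By the prolongation-rigidity theorem of \cite{KT}, any such geometry with $\ka_H\not\equiv0$ satisfies $\dim\mathfrak{inf}(M,Q)\le\mathfrak U:=\max_{0\ne\phi\in W_1\cup W_2}\dim\fa^\phi$, where $\fa^\phi$ is the Tanaka prolongation of $\fg_-\oplus\fa_0^\phi$ with $\fa_0^\phi=\{X\in\fg_0:X\cdot\phi=0\}$; and since $\fg$ is $|1|$-graded this prolongation closes up inside $\fg$, namely $\fa^\phi=\fg_{-1}\oplus\fa_0^\phi\oplus\fa_1^\phi$ with $\fa_1^\phi=\{Z\in\fg_1:[Z,\fg_{-1}]\subseteq\fa_0^\phi\}$. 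Finally, by the extremality step of \cite{KT} together with complete reducibility of the $\fg_0$-action, the maximum over $\phi$ is attained on a highest-weight line of $W_1$ or of $W_2$ (a mixed representative $\phi_1+\phi_2$ has a strictly smaller annihilator), so only the two algebras $\fa^{\phi_1},\fa^{\phi_2}$ need to be computed.

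For the upper bound the plan is to write down the highest weights of $W_1$ and $W_2$ explicitly, most efficiently after complexifying to $\fg_\C=\sl(2n+2,\C)$ with $\fp_\C$ the stabilizer of a $2$-plane, reading $W_1,W_2$ off the $A_{2n+1}$ Dynkin diagram by Kostant and then descending to the quaternionic real form. For each $\phi_i$ one identifies the annihilator $\fa_0^{\phi_i}$ as a parabolic-type subalgebra of the real $\fg_0$ and computes its dimension by a root count, and then solves the linear first-prolongation equation for $\fa_1^{\phi_i}$. The claim to verify is that for $n\ge2$ both cases give $\dim\fa^{\phi_i}=4n+\dim\fa_0^{\phi_i}+\dim\fa_1^{\phi_i}=4n^2-4n+9$, while every other extremal line yields a strictly smaller value; this gives $\mathfrak U=4n^2-4n+9$ and hence $\mathfrak S\le 4n^2-4n+9$.

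For the lower bound one must build, for each of the two components, a curved homogeneous model attaining the bound. The graded space $\fa^{\phi_i}=\fg_{-1}\oplus\fa_0^{\phi_i}\oplus\fa_1^{\phi_i}$ inherits a filtered Lie-algebra structure, and I would check that the associated local homogeneous space carries a regular normal almost quaternionic structure whose harmonic curvature is the orbit of $\phi_i$; this makes $\ka_H\not\equiv0$, with the structure torsion-free (quaternionic, so $\ka_1\equiv0$) when $i=2$ and with $\ka_2\equiv0$ (vanishing quaternionic Weyl curvature) when $i=1$. Concretely these two models can be presented as left-invariant structures on suitable solvable Lie groups, where the two harmonic components are computed directly. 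Homogeneity gives $\dim\mathfrak{inf}\ge\dim\fa^{\phi_i}=4n^2-4n+9$, and together with the upper bound this forces equality, which would complete the proof.

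The hard part will be twofold: first, pushing the Kostant/Tanaka computations honestly through the \emph{non-split} real form $\sl(n,\H)\times\sp(1)$ --- the annihilators and prolongation spaces must be those of the \emph{real} $\fg_0$-modules $W_i$, not of their complexifications, and the $\sp(1)$-factor has no parabolic to hide behind; and second, checking that the two explicit homogeneous models are genuinely non-flat with exactly the stated vanishing harmonic component and carry no extra ``hidden'' infinitesimal symmetries. The latter is precisely what the matching upper bound from \cite{KT} delivers, so the real work concentrates in the prolongation bookkeeping and in exhibiting and curvature-checking the two models.
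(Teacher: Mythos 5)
Your skeleton (Kruglikov--The upper bound via annihilators of the harmonic curvature plus Tanaka prolongation, lower bound via a filtered deformation realized on a locally homogeneous model) is the same as the paper's, and the prolongation step is even simpler than you allow: by prolongation rigidity of type $A_{2n+1}/P_2$ one has $\fa_1^\phi=0$ for every $\phi\neq 0$, so $\fa^\phi=\fg_{-1}\oplus\fa_0^\phi$ and no first-prolongation equation needs solving.

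The genuine gap is your extremality step. You assert that the maximum of $\dim\fa_0^\phi$ is attained on a ``highest-weight line'' of $W_1$ or $W_2$, to be located by complexifying to $A_{2n+1}$, applying Kostant, and descending to the quaternionic real form. This is precisely where the complex machinery fails here: the harmonic modules are irreducible \emph{real} modules for the non-split real form $\fg_0=\sp(1)\oplus\R Z\oplus\sl(n,\H)$, the Borel fixed point theorem is unavailable, and in fact $\mathfrak{U}_i<\mathfrak{U}_i^{\C}$, so the stabilizer of the complex lowest-weight vector overcounts and does not compute the real bound. You name this as ``the hard part'' but give no mechanism to resolve it, and the mechanism is the core of the paper: (i) a real irreducible module always admits a \emph{closed} $G_0$-orbit of minimal dimension in the projectivization (proved by comparing orbit closures after complexification); (ii) any closed orbit contains a representative in $\ker\fh_+$ for the grading induced by the parabolic $\tilde{P}_{2,2n-2}\subset\sl(n,\H)$ (a limit argument with $e^{tZ'}$); (iii) one then proves $\mathfrak{ann}(w)\cap(\tilde\fg_0)_-=0$ and analyses the residual $\fh_0$-action on the extremal slice --- e.g.\ in the curvature case the compact factor $\sp(1)\oplus\sp(1)$ acting on the real $8$-dimensional slice admits at most a one-dimensional annihilator $\frak{so}(2)$, which is exactly where the drop below the complex count occurs. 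Without a substitute for (i)--(iii), your claimed value $\mathfrak{U}=4n^2-4n+9$ is unestablished. A smaller omission on the realization side: in the curvature case the harmonic class must first be converted into an actual $\Lambda^2\H^{*n}\otimes\fg_0$-valued cochain via the (unique up to scale) equivariant map $b$ before the bracket can be deformed and Jacobi checked; your sketch treats the cohomology class itself as the deformation, which only works verbatim in the torsion case.
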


We exclude the case $n=1$ due to the exceptional isomorphism $\frak{sl}(2,\H) \simeq \frak{so}(1,5)$. In this case the geometry $PGL(2,\H)/P$ can be interpreted as a four-dimensional Riemannian conformal geometry and $\ka_H$ has two components of homogeneity $2$, which are the self--dual and anti--self--dual parts of the Weyl curvature. The submaximal symmetry dimension is $8$ and is achieved by $M=\C P^2$ \cite{E2,KT}.

\smallskip
\noindent {\bf Acknowledgements:}
The authors thank Jan Gregorovi\v{c} for suggesting that the submaximal symmetry dimensions for $n=2$ should be 17.
Lenka Zalabov\'{a} thanks Norway Grants NF-CZ07-INP-4-2382015 and NF-CZ07-INP-5-4362015 for financial support and the University of Troms\o\ for hospitality.

\section{Background on almost quaternionic and related geometries}

Almost quaternionic geometries are closely related to the projective and $c$--projective geometries, so we recall basic concepts common to these. Two (real) connections on a manifold $M$ of dimension $n$ are
\emph{projectively equivalent} if their unparameterized geodesics, i.e.\ curves satisfying $\na_{\dot{\gamma}} \dot{\gamma} \in \langle \dot{\gamma} \rangle$, coincide. Here $\langle - \rangle$ denotes the linear span over $C^\infty(M)$. Projectively equivalent connections do not necessarily have the same torsion, and each connection $\na$ is projectively equivalent to a torsion--free connection $\na-{1 \over 2}T_\na$. Two connections $\na$ and $\hat \na$ with the same torsion are projectively equivalent if and only if there is a one-form $\U \in \Omega^1(M)$ such that
 $$
\na - \hat \na= \Id\otimes(\U\circ\Id)+(\U\circ\Id)\otimes\Id.
 $$
A fixed class of torsion--free projectively equivalent connections $[\na]$ on a manifold $M$ is a \emph{projective structure} on $M$. It is proven in  \cite{E1} that submaximal symmetry dimension in the class of projective structures of dimension $n > 2$ is equal to $(n-1)^2+4$ (for $n=2$ it is 3), see also \cite{KT}.

A generalization of this concept to almost complex manifolds leads to $c$--projective structures. A connection $\na$ on $M$ of dimension $2n>2$ with almost complex structure $J$ is called \emph{complex} if $\na J=0$. Each almost complex manifold $(M,J)$ admits complex connections, because for arbitrary $\na$ the connection ${1 \over 2}(\na-J \na J)$ is complex. A complex connection $\na$ can be chosen \emph{minimal} meaning $T_\na={1\over 4}N_J$, where
 $$
N_J(X,Y) =[JX,JY]-J[JX,Y]-J[X,JY]-[X,Y]
 $$
is the Nijenhuis tensor.

A curve $\gamma$ on $M$ is \emph{J-planar} if
$\na_{\dot{\gamma}} \dot{\gamma} \in \langle \dot{\gamma},J\dot{\gamma} \rangle$ for a complex connection $\na$. Two complex connections on $(M,J)$ are \emph{$c$--projectively equivalent} if they share the same $J$--planar curves. Two complex connections $\na$ and $\hat \na$ with the same torsion are $c$--projectively equivalent if and only if there is a one-form $\U \in \Omega^1(M)$ such that
 $$
\na - \hat\na= \sum_{A \in\{\Id,J\}}A^2\bigl(A\otimes(\U \circ A)+(\U \circ A) \otimes A\bigr).
 $$
An \emph{almost $c$--projective structure} on $(M,J)$ is a class of $c$--projectively equivalent complex connections $[\na]$ sharing the same fixed torsion. It is proven in \cite{KMT} that the submaximal dimension in the class of almost $c$--projective structures of (complex) dimension $n$ is equal to $2n^2-2n+4$ for $n\neq3$ and $18$ for $n=3$.

Let us return to almost quaternionic structures. Consider an almost quaternionic manifold $(M,Q)$ of dimension $4n$.
Analogously to the almost complex case, this admits a quaternionic connection.
Indeed, for any local basis $b=(I,J,K)$ of $Q$ and a linear connection $\na$, the connection
$\na_b:={1\over 4}(\na -I\na I -J \na J - K \na K)$ is quaternionic. Any other choice
$\hat b=(\hat I, \hat J, \hat K)$ is related to $b$ via a transformation from $SO(Q)$, so
$\na_{\hat b}={1\over 4}(\na -\hat I\na \hat I -\hat J \na \hat J - \hat K \na \hat K)$ coincides with $\na_b$.
Denote $B:={1 \over 6}(N_I+N_J+N_K)$. The canonical {\em structure tensor} of $Q$ is given by
 $$
T_Q:=B+\delta(\tau_I \otimes I)+\delta(\tau_J \otimes J)+\delta(\tau_K \otimes K),
 $$
where $\tau_A(X)={1 \over4n-2} Tr(AB(X))$ for $A=I,J,K$ and $\delta:T^*M \otimes T^*M \otimes TM \to \wedge^2 T^*M \otimes TM$ denotes the Spencer operator of alternation \cite{Alekseevsky}.
A quaternionic connection can be chosen minimal meaning its torsion coincides with $T_Q$.
An almost quaternionic structure $Q$ is a \emph{quaternionic structure} if $T_Q$ vanishes.

A curve $\gamma$ is called \emph{$Q$--planar} if $\na_{\dot{\gamma}} \dot{\gamma} \in \langle \dot{\gamma},I\dot{\gamma},J\dot{\gamma},K\dot{\gamma}  \rangle$ for a quaternionic connection $\na$. Two quaternionic connections $\nabla$ and $\hat\nabla$ on $(M,Q)$ with the same torsion share the same $Q$--planar curves if and only if there is a one-form $\U \in \Omega^1(M)$ such that
 $$
\nabla-\hat\nabla = \sum_{A\in\{\Id,I,J,K\}} A^2\Bigl(A\otimes (\U \circ A)+(\U\circ A)\otimes A\Bigr).
 $$
Analogously to the $c$--projective case, we fix the class of connections $[\na]$ sharing the same $Q$--planar curves and with the minimal torsion $T_\na=T_Q$. These are called \emph{Oproiu connections}. The $Q$--planar curves are the (unparameterized) geodesics of all Oproiu connections \cite{HS}. Given an arbitrary quaternionic connection, one can construct an Oproiu connection by an explicit formula \cite[\S3.11]{Alekseevsky}.

An almost quaternionic structure is quaternionic if and only if some (and thus any) Oproiu connection $\na$ is torsion--free. In that case, the curvature $R_\na$ of an Oproiu connection $\na$ decomposes as $R_\na=W_\na+P_\na$ with respect to the structure group $G_0=Sp(1)GL(n,\H)$, where the trace-free part $W_\na$ is the
\emph{Weyl tensor} and $P_\na$ is the \emph{Ricci tensor} of $\na$, see \cite{Alekseevsky}.
It turns out that the Weyl part $W_\na$ of $R_\na$ does not depend on the choice of Oproiu connection and is a complete obstruction to the flatness of a quaternionic structure.

\begin{rem*}
All three geometries discussed in this section can be described as parabolic geometries of type
$PGL(n+1,\mathbb{K})/P$, where $\mathbb{K}=\R, \C, \H$ and $P$ is the stabilizer of a $\mathbb{K}$-line
in $\mathbb{K}^{n+1}$, see \cite{parabook}. This explains many similarities between them.
\end{rem*}

\section{Setup from parabolic geometries and annihilators}\label{Ssetup}
In this section, we summarize basic facts about almost quaternionic structures from the parabolic viewpoint.
Consider the Lie algebra $\fg=\frak{sl}(n+1,\H)$, which is a real form of $A_{2n+1}=\frak{sl}(2n+2,\C)$.
The parabolic subalgebra $\fp=\op{Lie}(P)$, determining $|1|$-grading $\fg=\fg_{-1} \oplus \fg_0 \oplus \fg_1$,
is encoded by the following Satake diagram:
 $$
\DiagramNoCoef
 $$
This grading can be viewed via the matrix $(1,n)$-block decomposition $\begin{pmatrix}a & p \\ v & A\end{pmatrix}$
in the standard representation on $\H^{n+1}=\H\times\H^n$: $\fg_{-1}=\{v\in\H^n\}$, $\fg_{1}=\{p\in\H^{*n}\}$ and
$\fg_0=\{(a,A)\in\H\oplus\frak{gl}(n,\H):\op{Re}(a)+\op{Re}(\op{tr}A)=0\}$.
In particular, the real part of $a\in\H$ is determined by $\op{tr}A$ and the imaginary part belongs to $\frak{sp}(1)$. Thus the reductive Lie algebra $\fg_0$ can be equivalently viewed as $\frak{sp}(1)+\frak{gl}(n,\H)$, and this
further decomposes as $\fg_0=\frak{sp}(1)+\R Z+\frak{sl}(n,\H)$, where the semisimple part is $\fg_0^{ss}=\frak{sp}(1)+\frak{sl}(n,\H)$ and the grading element
$Z=\op{diag}\bigr(\frac{n}{n+1},\frac{-1}{n+1},\dots,\frac{-1}{n+1}\bigl)$
generates the center $\frak{z}(\fg_0)$. The Lie algebra $\fg_0^{ss}$ is encoded by the Satake diagram
produced by removing the crossed node and adjacent edges:
 $$
\DiagramNoX
 $$

A fundamental invariant of a regular normal parabolic geometry is the \emph{harmonic curvature} $\ka_H$, taking
values in the $G_0$--module $H^2(\fg_{-},\fg)$ (that is the Lie algebra cohomology of $\fg_-$ with values in $\fg$;
in the quaternionic case the regularity requirement is vacuous, i.e.\ $H^2=H^2_+$ has
positive homogeneity because the geometry is $|1|$-graded). This is a completely reducible module,
and its two irreducible components $H^2_1$ and $H^2_2$ (the subscript denotes homogeneity of the cohomology
with respect to $Z$) yield the corresponding decomposition of $\ka_H$ into two summands:
 \begin{itemize}
\item the \emph{torsion} $\ka_1$ of homogeneity $1$ valued in $H_1^2(\fg_{-1},\fg)$, and
\item the \emph{quaternionic Weyl curvature} $\ka_2$ of homogeneity $2$ valued in $H_2^2(\fg_{-1},\fg)$.
 \end{itemize}
The harmonic curvature $\kappa_1$ coincides with the torsion of an arbitrary Oproiu connection, and if the torsion vanishes, then the harmonic curvature $\kappa_2$ coincides with Weyl tensor of an arbitrary Oproiu connection. For an almost quaternionic structure that is not quaternionic $\ka_H=\ka_1+\ka_2$ and both components are non-vanishing in general.

To compute the structure of these modules,  where $\ka_1$ and $\ka_2$ have their values,
we invoke the complexification:
the corresponding parabolic subalgebra $\fp_\C \subset \fg_\C$ induces $|1|$-grading of $\fg_\C$ and
$H^2(\fg_{-} ,\fg) \otimes \C \simeq H^2 ((\fg_\C)_-,\fg_\C)$.
Explicit algorithmic description of the $G^\C_0$--module structure of the latter follows from the Kostant's version
of the Bott-Borel-Weil theorem \cite{parabook}.

In the case of almost quaternionic structures, the submodules corresponding to the torsion $\ka_1$ and quaternionic Weyl curvature $\ka_2$ are encoded by minus lowest weights (adapting the convention of \cite{BE}) as follows,
where the number over the $i$'th node is the coefficient of the corresponding fundamental weight $\omega_i$:
 $$
\ka_1:\ \ \ \Diagram{3}{-3}{0}{1}{0}{1} \ \ \quad \ \ \ka_2:\ \ \ \Diagram{0}{-4}{3}{0}{0}{1}
 $$

 \begin{rem*}
Let us point out that $H^2_+(\fg_{-},\fg)$ is a real $\fg_0$--module that we identify with a real
$\fg_0$--submodule of $H^2_+(\fg_{-} ,\fg) \otimes \C$. Note also that minus the lowest weight
is equal to the highest weight of the dual module.
 \end{rem*}

Let us recall how to get a universal upper bound $\mathfrak{U}$ on the submaximal symmetry dimension $\mathfrak{S}$, and explain the role of the $G_0$-module $H^2(\fg_{-},\fg)$. Each infinitesimal symmetry has to preserve (each component of) $\ka_H$, and thus it belongs to the annihilator of $\ka_H$ in $\fg$. On the level of $\fg_0$ this implies that the grading $\mathfrak{s}=\oplus_i\mathfrak{s}_i$ associated to the filtration by stabilizer on the
symmetry algebra (determined by a point $u\in\mathcal{P}$ of the Cartan bundle \cite{KT}) satisfies
 $$
\mathfrak{s}_0\subset\mathfrak{a}_0=\{  \phi \in \fg_0: \phi\cdot\ka_H=0  \}.
 $$
Furthermore it is proven in \cite{KT} that $\mathfrak{S} \leq \mathfrak{U}$ for
 $$
\mathfrak{U}=\max\{\dim (\mathfrak{a}^\psi):0\neq\psi \in H^2_+(\fg_{-},\fg)\},
 $$
where $\mathfrak{a}^\psi$ is the \emph{Tanaka prolongation} of the pair $(\fg_-, \frak{a}_0^\psi)$, and $\frak{a}_0^\psi$ is the annihilator of $\psi$ in $\fg_0$. Moreover, $\mathfrak{S} \leq \mathfrak{U} \leq \mathfrak{U}^\C$, where $\mathfrak{U}^\C$ is the universal upper bound for the complexified geometry, and the
universal upper bound is realized by the stabilizer of minus the lowest weight vector in the complex case.

By \cite[Corollary 3.4.8]{KT} the parabolic structures of type $A_{2n+1}/P_2$ are \emph{prolongation--rigid},
i.e.\ the Tanaka prolongation $\mathfrak{a}_+^\psi=0$ for any $\psi\neq0$. This implies the corresponding
statement for real geometries, see \cite[Proposition 3]{K}. Thus almost quaternionic structures are prolongation--rigid $\mathfrak{a}^\psi_1=0$, and so $\mathfrak{a}^\psi=\fg_{-1}\oplus \mathfrak{a}^\psi_0$ for each non-zero element $\psi \in H^2_+(\fg_{-1},\fg)$.

Let $\mathfrak{S}_i$ be the maximal symmetry dimension of the almost quaternionic geometry
of $\dim_\H=n$ in the case $\kappa_i\not\equiv0$.
We are going to bound this
$\mathfrak{S}_i\leq\mathfrak{U}_i=\max\{\dim(\mathfrak{a}^\psi):0\neq\psi\in H^2_i(\fg_{-1},\fg)\}$
and prove that the submaximal symmetry dimension is
 $$
\mathfrak{U}_1=\mathfrak{S}_1=\mathfrak{S}=\mathfrak{S}_2=\mathfrak{U}_2.
 $$
We, however, cannot directly apply the methods from complex parabolic geometry. It turns out that
the corresponding upper bounds are strictly less than the upper bounds for the complexification: $\mathfrak{U}_i<\mathfrak{U}_i^\C$ and thus $\mathfrak{U}<\mathfrak{U}^\C$.

A similar phenomenon was noticed for Lorentzian conformal geometries in \cite{DT}, where the submaximal symmetry dimension was computed by listing all subalgebras of high dimensions that stabilize a non-zero element in the harmonic curvature module. In this paper, we choose a different approach by identifying a real analogue to the lowest weight vector in the real harmonic curvature module.

\section{Minimal orbits}
Recall that in the case of complex parabolic geometries, obtaining the upper symmetry bound is based on the Borel fixed point theorem, which states that there is a unique closed orbit, which is of minimal dimension, in the projectivization of $H^2 (\fg_-,\fg)$. Then the upper bound is given by dimension of the stabilizer of a weight vector corresponding to minus the lowest weight (generating the minimal orbit). The Borel theorem cannot be applied in the case of almost quaternionic structures, but we still consider the $\fg_0$--orbits in the projectivization of $H^2 (\fg_{-1},\fg)$ to find one of the minimal dimension. The following statement is immediate.
 \begin{lem*}
The annihilator of $0\neq\ka_i\in H^2_i$ ($i=1,2$) is of maximal dimension in $\fg_0$ if and only if the
$G_0$--orbit through $\ka_i$ has minimal dimension in the projectivization of $H^2(\fg_{-1},\fg)$.
 \end{lem*}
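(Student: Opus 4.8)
The plan is to deduce this directly from the orbit--stabilizer relation; the only subtle point is the shift by one that occurs when passing from a vector to the line it spans, and this shift is controlled by the grading element $Z$.

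Fix $i\in\{1,2\}$, write $V=H^2(\fg_{-1},\fg)$, and let $0\neq\psi=\ka_i\in H^2_i\subseteq V$. By definition $\mathfrak{a}_0^\psi=\{\phi\in\fg_0:\phi\cdot\psi=0\}$ is the Lie algebra of the isotropy subgroup $\op{Stab}_{G_0}(\psi)$ of the \emph{vector} $\psi$, while the isotropy subgroup $\op{Stab}_{G_0}([\psi])$ of the \emph{point} $[\psi]\in\mathbb{P}(V)$ has Lie algebra $\mathfrak{p}_\psi:=\{\phi\in\fg_0:\phi\cdot\psi\in\R\psi\}$, so that $\mathfrak{a}_0^\psi\subseteq\mathfrak{p}_\psi$. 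The next step is to observe that $\dim\mathfrak{p}_\psi=\dim\mathfrak{a}_0^\psi+1$: since $H^2_i$ is the homogeneity-$i$ component, the grading element $Z\in\mathfrak{z}(\fg_0)$ acts on $\psi$ by the non-zero scalar $i$, whence $Z\in\mathfrak{p}_\psi\setminus\mathfrak{a}_0^\psi$; conversely, if $\phi\in\mathfrak{p}_\psi$ and $\phi\cdot\psi=c\psi$, then $(\phi-(c/i)Z)\cdot\psi=0$, so $\phi\in\mathfrak{a}_0^\psi+\R Z$, giving $\mathfrak{p}_\psi=\mathfrak{a}_0^\psi\oplus\R Z$.

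Finally, by the orbit--stabilizer relation the $G_0$-orbit through $[\psi]$ is an immersed submanifold of dimension $\dim\bigl(G_0\cdot[\psi]\bigr)=\dim\fg_0-\dim\mathfrak{p}_\psi=\dim\fg_0-1-\dim\mathfrak{a}_0^\psi$; this orbit lies in the $G_0$-invariant subset $\mathbb{P}(H^2_i)\subseteq\mathbb{P}(V)$, so its dimension is the same whether computed in $\mathbb{P}(V)$ or in $\mathbb{P}(H^2_i)$. Thus the functions $\psi\mapsto\dim\mathfrak{a}_0^\psi$ and $\psi\mapsto\dim(G_0\cdot[\psi])$ have constant sum $\dim\fg_0-1$ over the non-zero elements of $H^2_i$, and consequently the former attains its maximum exactly where the latter attains its minimum. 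There is essentially no obstacle here---the statement is, as claimed, immediate---provided one keeps track of the $+1$ coming from projectivization; overlooking the grading-element bookkeeping is the only way to get it wrong.
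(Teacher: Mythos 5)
Your proposal is correct: the orbit--stabilizer count together with the observation that the grading element $Z$ acts by the non-zero scalar $i$ on $H^2_i$ (so the projective stabilizer is exactly $\mathfrak{a}_0^\psi\oplus\R Z$) is precisely the argument behind the paper's assertion, which is stated there without proof as ``immediate.'' Your write-up simply makes that bookkeeping explicit, so there is nothing to correct.
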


We will need the following result on existence of closed orbits. Here we denote by $[v]$ the projection of
a non-zero vector $v\in\V$ to the projective space $P\V$.

 \begin{lem*}\label{minorbitisclosed}
Let $\V$ be a real irreducible $L$--module for a real Lie group $L$. Then there exists $0\neq v\in\V$
such that for $[v]\in P\V$ the orbit $L\cdot [v]\subset P\V$ is closed and of minimal dimension.
 \end{lem*}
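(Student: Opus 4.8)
The plan is to reduce to a compact group acting on a sphere. First I would pass to a maximal compact subgroup $K\subset L$ and observe that since $\V$ is finite-dimensional we may put an $L$-invariant inner product $\langle\,,\rangle$ on $\V$ only after averaging over $K$, so $K$ acts orthogonally on $\V$; the unit sphere $S(\V)$ is then $K$-invariant and compact, and the projection $S(\V)\to P\V$ is a $K$-equivariant proper map. The key reduction step is that $K\cdot[v]=L\cdot[v]$ for every $[v]\in P\V$: the orbit map $L/L_{[v]}\to P\V$ has image with the same dimension computed from either $L$ or $K$ because $L$ is (up to center, which acts by scalars and hence trivially on $P\V$) generated by $K$ and $\exp(i\mathfrak{k})$-type directions whose infinitesimal action on a line is, after diagonalizing, tangent to the $K$-orbit. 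More carefully, I would argue at the Lie-algebra level: $\mathfrak{l}=\mathfrak{k}\oplus\mathfrak{p}$ (Cartan decomposition of the reductive part, plus center acting trivially on $P\V$), and for $X\in\mathfrak{p}$ the vector field $X$ generates on $P\V$ a gradient-like flow for the function $[v]\mapsto \langle X\cdot \hat v,\hat v\rangle$ (with $\hat v$ the unit representative), so $L$-orbits and $K$-orbits coincide as sets — this is the standard Kempf–Ness / Mostow fact. Hence closedness and dimension of orbits in $P\V$ can be read off from the compact group $K$ alone.

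Next, since $K$ is compact, every $K$-orbit in $P\V$ is already closed, so closedness is automatic and the only content is to produce an orbit of minimal dimension. I would take $f:P\V\to\R$, $f([v])=\langle C\cdot\hat v,\hat v\rangle$ for a suitable element, or more simply invoke that the continuous function $[v]\mapsto \dim(K\cdot[v])$ is lower semicontinuous on the compact space $P\V$ and hence attains its minimum on a nonempty closed $K$-invariant subset $Z$; pick any $[v]$ in $Z$. Then $K\cdot[v]$ has minimal dimension among all $K$-orbits, equivalently (by the previous paragraph) among all $L$-orbits, and it is closed. This $v$ is the desired vector.

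The main obstacle is the identification $K\cdot[v]=L\cdot[v]$, i.e.\ that introducing a $K$-invariant (not $L$-invariant) metric does not lose orbit-theoretic information. I would handle it by the infinitesimal argument above: writing $\mathfrak{l}^{ss}=\mathfrak{k}\oplus\mathfrak{p}$ with $\mathfrak{p}=i\mathfrak{k}$ inside $\mathfrak{l}_\C$, for $X\in\mathfrak{p}$ and a unit vector $\hat v$ one computes that the component of $X\cdot[v]$ normal to $K\cdot[v]$ in $P\V$ is $\nabla f$ where $f([w])=\tfrac12\langle X\cdot\hat w,\hat w\rangle$; since $f$ is $K$-invariant along $K\cdot[v]$ only at critical points, one shows the tangent space to the $L$-orbit equals the tangent space to the $K$-orbit at a point where $f|_{L\cdot[v]}$ is extremized, and then by homogeneity at every point. (Alternatively one may cite Mostow's theorem that a real algebraic group orbit and its maximal-compact-subgroup orbit on a real projective representation have the same closure properties.) The irreducibility hypothesis guarantees $\V$ carries a nonzero invariant inner product of the right signature after the $K$-averaging and rules out degenerate behavior, but it is not otherwise essential to the argument; I would keep it since it is what is available and makes the statement clean.
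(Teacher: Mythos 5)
Your key reduction step --- the claim that $K\cdot[v]=L\cdot[v]$ for every $[v]\in P\V$ --- is false, and the argument collapses with it. A counterexample is $L=SL(2,\R)$ acting on the (irreducible) adjoint module $\V=\mathfrak{sl}(2,\R)$: the maximal compact subgroup $K=SO(2)$ is one-dimensional, so all of its orbits in $P\V\cong\R P^2$ have dimension at most $1$, while $L$ has open, two-dimensional orbits there (e.g.\ the projectivization of $\{X:\det X<0\}$). The same example refutes the consequence you draw from the reduction, namely that every $L$-orbit in $P\V$ is closed: the closure of each open orbit contains the projectivized nilpotent cone. The Kempf--Ness/Mostow results you cite assert something weaker --- minimal vectors (critical points of the norm functional) detect which orbits are closed, and within a closed orbit they form a single $K$-orbit --- not that $K$-orbits and $L$-orbits coincide in $P\V$. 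Indeed your own gradient-flow computation points the other way: for $X\in\mathfrak{p}$ the flow moves $[v]$ transversally to $K\cdot[v]$ except at critical points, i.e.\ the $\mathfrak{p}$-directions genuinely enlarge the orbit. (The opening remark that averaging over $K$ yields an $L$-invariant inner product is also off --- it yields only a $K$-invariant one --- but that is minor by comparison.)

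More importantly, the proposal never engages with the actual difficulty of the lemma. Existence of an orbit of minimal dimension is trivial (orbit dimension is a non-negative integer); the content is that such an orbit is closed, and for a real group this is not automatic because the closure of an orbit can a priori contain orbits of the \emph{same} dimension. The paper's proof addresses exactly this point by complexifying: for a real $v$ the stabilizer of $[v]$ in $\mathfrak{l}^\C$ is the complexification of its stabilizer in $\mathfrak{l}$, so $\dim_\C L^\C\cdot[v]=\dim_\R L\cdot[v]$; an equal-dimensional orbit in the closure of a minimal real orbit would then force an equal-dimensional complex orbit in the closure of $L^\C\cdot[v]$, which is impossible since for complex orbits the boundary consists of orbits of strictly smaller dimension. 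Some substitute for this step --- complexification, algebraicity, or a correctly applied real Kempf--Ness/Richardson--Slodowy argument --- is indispensable; as written, your proof does not establish the lemma.
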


 \begin{proof}
We claim that any orbit $L\cdot [v]$ of minimal dimension $d=\dim L\cdot [v]$ is closed.
Indeed, consider complexification of the group, the action and the representation.
The element $v\in\V+0\cdot i\subset \V^\C$ determines the complex orbit $L^\C\cdot[v]\subset P\V^\C$
of the same {\it complex} dimension $d$ (because the annihilator of $v$ in $\mathfrak{l}^\C$ intersects
$\mathfrak{l}=\op{Lie}(L)$ by the annihilator of $v$ in $\mathfrak{l}$).
If closure of the orbit $L\cdot [v]$ contains another orbit
$L\cdot[v']$ (necessary of the same dimension $d$), then closure of the complex orbit $L^\C\cdot[v]$
contains the complex orbit $L^\C\cdot[v']$ (again of the same complex dimension $d$). But this cannot
happen in the complex case: closure of any orbit can contain only orbits of strictly smaller dimensions
(this follows from decomposition of a vector from $\V^\C$ via weight vectors with respect to the standard
Borel subalgebra: every orbit has a representative with the coefficient 1 at the highest weight vector).
 \end{proof}

Let's consider the minimal orbits in the projectivizations $PH^2_1$ and $PH^2_2$ of
both irreducible components of $H^2(\fg_{-1},\fg)$.

We will need the notations describing parabolic subalgebras of real semi-simple Lie algebras. The conjugacy
classes of such are in bijection with some subsets of the Satake diagram corresponding to a fixed choice of (maximally non-compact) Cartan subalgebra. These will be denoted by crossing out certain white nodes on
the Satake diagram, cf.\ \cite{parabook}.
Let $\Sigma$ be the set of crossed out nodes. We denote by $\p_\Sigma^{}$ the standard parabolic subalgebra corresponding to $\Sigma$. This is the subalgebra generated by all positive restricted root vectors, and negative restricted root vectors for which the restricted roots are linear combinations of the complement to $\Sigma$. The semi-simple Levi factor $\p_\Sigma^{ss}$ is given by the Satake diagram with $\Sigma$ removed.

Parabolic subalgebras are in bijection with $\mathbb{Z}$-gradings of semi-simple Lie algebras
$\fg=\fg_{-k}\oplus\ldots\oplus\fg_k$. Indeed, the parabolic subalgebra contains a unique grading element
$Z\in\fg$, for which $\mathfrak{ad}_Z|_{\fg_n}=n\,\text{Id}_{\fg_n}$.
Conversely, given a grading of $\fg$, the subalgebra $\fp=\fg_{\ge0}$ of non-negative gradation is parabolic
(all parabolics arise in this way \cite{parabook}).

Recall that $\fg_0=\sp(1)+\R Z+\mathfrak{sl}(n,\H)$. Let $\V$ be a $\fg_0$-module which is irreducible under the restricted representation of $\sl(n,\H)$. The group $\tilde G_0=PGL(n,\H)$ acts effectively on $P\V$, and the Lie algebra $\tilde \fg_0=\mathfrak{sl}(n,\H)$ is simple.
Consider the following parabolic subgroups of $\tilde G_0$:
$H=\tilde{P}_2$ in the case $n=2$, and $H=\tilde{P}_{2,2n-2}$ in the case $n>2$
(tilde in $P$ indicates that numeration of the parabolics is with respect to $\tilde G_0$).
This parabolic determines the grading on $\tilde \fg_0$ with respect to which the parabolic $(\tilde \fg_0)_{\ge0}$
is equal to $\fh=\fh_0\oplus \fh_+$ as a vector space, where
 \begin{align*}
&\fh_0=\sp(1)\oplus\frak{gl}(n-2,\H)\oplus\sp(1)\oplus\R Z',\\
&\fh_+=\fh_1\oplus\fh_2=\mathfrak{heis}(8n-12,\H),
 \end{align*}
and $\mathfrak{heis}(8n-12,\H)$ is the quaternionification of the real (nilpotent)
Heisenberg algebra $\mathfrak{heis}(2n-3)$.
The action of $\mathfrak{heis}(8n-12,\H)$ on $\H^n$ is given by $n\times n$ quaternionic matrices with zeroes
everywhere except for the first row and the last column, and with zeroes on the diagonal.

To distinguish the summand $\mathfrak{sp}(1)$ in $\fg_0$ from these in $\fh_0$ we will use the
notations $\mathfrak{sp}(1)_{\rm left}$ and $\mathfrak{sp}(1)_{\rm right}$ for the latter
(marking them in the appearing order).

The grading element $Z' \in \tilde \fg_0$ acts on $\V$, and $\V$ decomposes as $\V=\oplus_{i}\V_{\theta_i}$ with respect to the action of $Z'$, where $Z'|_{\V_{\theta_i}}=\theta_i\text{Id}_{\V_{\theta_i}}$.
Therefore, $\exp(tZ')=\oplus_i e^{t\theta_i}\text{Id}_{\V_{\theta_i}}$.
For $p\in\fh_1$, we have $[Z',p]=p=Z'p-pZ'$. This implies that for all $v \in \V_{\theta_i}$ we have
 $$
Z'\cdot (p \cdot v)=(p+pZ')\cdot v = (1+\theta_i)(p\cdot v),
 $$
which implies $p \cdot v \in  \V_{\theta_i+1}$.

 \begin{lem*}\label{expandclosedorbit}
Let $\V$ be an irreducible $\tilde G_0$-module and $0\neq v\in \V$.
Suppose that the orbit $\tilde G_0\cdot[v]\subset P\V$ is closed.
Then there exists $0\neq w \in \V_{\theta_{max}}=\ker{\fh_+}$, $\theta_{max}=\max_i\{\theta_i\}$, such that
$[w]\in \tilde G_0\cdot[v]$.
 \end{lem*}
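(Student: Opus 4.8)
The plan is to exploit the $Z'$-grading together with closedness of the orbit to push $[v]$ up to the top graded piece using the unipotent part $\fh_+$. First I would recall that $\fh=\fh_0\oplus\fh_+=(\tilde\fg_0)_{\ge 0}$ is a parabolic subalgebra of $\tilde\fg_0=\sl(n,\H)$, corresponding to a genuine parabolic subgroup $H\subset\tilde G_0$; since $\tilde G_0/H$ is a compact (generalized) flag manifold, $H$ is cocompact, so $\tilde G_0=HK$ for a maximal compact $K$ — in particular the $\tilde G_0$-orbit of $[v]$ equals the $H$-orbit of $[v]$ followed by the $K$-action, and more to the point the closure of $\tilde G_0\cdot[v]$ contains the closure of $H\cdot[v]$.

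The key computation is the following. Decompose $v=\sum_{i}v_i$ with $v_i\in\V_{\theta_i}$, and let $\theta_j$ be the largest index with $v_j\neq 0$; set $w:=v_j$. I claim $[w]\in\overline{\exp(\R_{>0}Z')\cdot[v]}$: indeed $\exp(tZ')\cdot v=\sum_i e^{t\theta_i}v_i$, so after rescaling by $e^{-t\theta_j}$ we get $\sum_i e^{t(\theta_i-\theta_j)}v_i\to v_j$ as $t\to+\infty$, because every exponent $\theta_i-\theta_j$ with $v_i\neq0$ is $\le 0$ and is $0$ only for $i=j$. Hence $[w]$ lies in the closure of the orbit $\tilde G_0\cdot[v]$. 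But $\tilde G_0\cdot[v]$ is closed by hypothesis, so $[w]\in\tilde G_0\cdot[v]$. It remains to identify $\V_{\theta_j}$ with $\V_{\theta_{max}}=\ker\fh_+$: since $\fh_1$ raises the $Z'$-degree by $1$ (the computation $Z'\cdot(p\cdot v)=(1+\theta_i)(p\cdot v)$ in the excerpt) and likewise $\fh_2$ raises it by $2$, and since $\V$ is finite-dimensional, $\fh_+$ maps $\V_{\theta_{max}}$ to higher, hence zero, graded pieces, so $\V_{\theta_{max}}\subseteq\ker\fh_+$; conversely, if $w$ is killed by $\fh_+$ then, $\V$ being $\tilde G_0$-irreducible and $\fh_+$ together with the lower-triangular nilradical generating $\tilde\fg_0$ modulo $\fh_0$, one shows $w$ must be a highest-$Z'$-weight vector, i.e.\ $\ker\fh_+\subseteq\V_{\theta_{max}}$. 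Finally, one must check that the $\theta_j$ produced above is actually $\theta_{max}$ for \emph{some} representative of $[v]$: this is where closedness is used a second time — the closed orbit $\tilde G_0\cdot[v]$ is compact, and acting by a one-parameter subgroup $\exp(tZ')$ on a generic point of it and passing to the limit lands in a $\tilde G_0$-invariant compact set inside the orbit on which $\fh_+$ acts nilpotently, forcing a genuine $\fh_+$-fixed line.

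The main obstacle I anticipate is the last identification: showing that $\ker\fh_+$ is exactly the top $Z'$-eigenspace $\V_{\theta_{max}}$ and that the limit point $w$ can be arranged to lie there rather than merely in some intermediate eigenspace. The clean way around this is to note that once $[w]$ with $w\in\V_{\theta_j}$ lies in the (closed, hence $\tilde G_0$-stable and compact) orbit, we may replace $v$ by $w$ and repeat: if $\fh_1\cdot w\neq 0$ pick $p\in\fh_1$ with $p\cdot w\neq 0$, then $p\cdot w\in\V_{\theta_j+1}$ also has its class in the orbit (apply $\exp(p)$ and rescale as above), strictly increasing the $Z'$-degree; since $\dim\V<\infty$ this terminates at a vector in $\V_{\theta_{max}}$ annihilated by all of $\fh_+$, and irreducibility of $\V$ guarantees $\V_{\theta_{max}}\ne 0$. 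I would present the argument in this inductive form, as it avoids any appeal to structure theory of $\ker\fh_+$ and uses only the two facts already in hand: $\fh_+$ strictly raises $Z'$-degree, and the orbit is closed.
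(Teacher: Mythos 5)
Your argument is correct and essentially reproduces the paper's proof: both decompose $v$ into $Z'$-eigencomponents, use $\exp$ of degree-raising elements of $\fh_1$ (supplied by irreducibility) to push the top-degree component upward, and use the projective limit along $e^{tZ'}$ together with closedness of the orbit to land in $\V_{\theta_{max}}=\ker\fh_+$; the only difference is cosmetic, namely that you pass to the limit after each step to work with pure-grade vectors, while the paper raises the top component of $v$ finitely many times and takes a single limit at the end. One small imprecision worth fixing: after applying $\exp(p)$ to a pure-grade $w$, the rescaled limit gives the class of the highest nonzero graded component, which is $[p^m\cdot w]$ for the maximal $m$ with $p^m\cdot w\neq0$ rather than $[p\cdot w]$ itself, but since this is still a pure-grade vector of strictly higher degree in the closed orbit, your induction goes through unchanged.
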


 \begin{proof}
Decompose $v=\sum_{\theta_i}v_{\theta_i}$ into $Z'$-eigenvectors as above. Let $v_{\theta_j}$ be the non-zero component of the greatest index. If $\theta_j<\theta_{max}$ then, due to irreducibility, there exists
$p \in (\tilde\fg_0)_1$ such that $p\cdot v \neq 0$ (in the opposite case $v$ generates a proper submodule).
Then $w_0 =\exp(\tau p)v$ for small $\tau>0$ has a non-zero component in $\V_{\theta_j+1}$. Repeat this procedure
for $w_0,w_1,\ldots, w_{k-1}$ until $w_k$ has non-zero component in $\V_{\theta_{max}}$
(if $\theta_j=\theta_{max}$ then $w_k=v$). This takes a finite number of operations, because $\theta_{max}$
is finite and $\theta_i\in\mathbb{Z}$, $-\theta_{max}\leq\theta_i\leq\theta_{max}$.
Since the greatest eigenvalue dominates and the orbit of $v$ is closed, the limit
$\langle w \rangle=\lim\limits_{t\to+\infty}e^{tZ'}\langle w_k \rangle \in P\V_{\theta_{max}}$ exists.
Here $\langle w_k \rangle=\R\cdot w_k$. Moreover, there exists an element $w$ in $\V_{\theta_{max}}$ which is projected to the limit $\langle w \rangle$.
 \end{proof}

 \subsection{Minimal orbits in the curvature module}
The irreducible $G_0$-submodule $H^2_2\subset H^2(\fg_{-},\fg)$ will be denoted in this subsection
by $\V^{\rm II}$ to indicate its homogeneity $2$. Since the grading element $Z$ acts on it
by multiplication by $2$, it cannot be in the annihilator of $\ka_2$. It follows from the description of the (complexified) curvature module in the previous section that the action of $\frak{sp}(1)$ preserves $\ka_2$ and is always contained in the annihilator. Therefore we can restrict our attention to the action of
$\tilde \fg_0=\frak{sl}(n,\H)$.
With respect to it the curvature module has the highest weight $\omega_1+3\omega_{2n-1}$
(we flip the Satake diagram), and hence can be identified with an irreducible real $\tilde G_0$--module
 $$
\V^{\rm II}=S^3_\C\H^{*n} \odot \H^{n}.
 $$
Here $\odot$ denotes the Cartan product (kernel of the tensor contractions), and we use the complex notations
(the complex tensor products are taken with respect to an arbitrary invariant complex structure, say
$I\in\op{Im}\H\subset\text{End}(\H^n)$, whose choice is inessential).
For real description we refer to \cite[Proposition 4.1.8]{parabook}, see also Remark \ref{realtensorproj}, but we use the complex notations (even in describing the real objects).

We would like to find a $\tilde G_0$-orbit of minimal dimension (closed by Lemma \ref{minorbitisclosed})
in $P\V^{\rm II}$. Due to Lemma \ref{expandclosedorbit} we can assume it is represented by a non-zero element
$w\in\V^{\rm II}_{\theta_{max}}=\ker{\fh_+}$. The element $w$ has pure grading with respect to $Z'$ and
hence its annihilator in $\tilde\fg_0$ is also graded:
$\mathfrak{ann}(w)=\!\mathop{\oplus}\limits_{s=-2}^2\mathfrak{ann}(w)_s$ 
$\bigl(\,=\!\!\mathop{\oplus}\limits_{s=-1}^1\mathfrak{ann}(w)_s$ for $n=2\bigr)$.
We already know that $\mathfrak{ann}(w)_+=\fh_+$.

 \begin{lem*}\label{nonegstab-II}
We have: $\mathfrak{ann}(w)\cap(\tilde\fg_0)_{-}=0$, i.e.\ $\mathfrak{ann}(w)_{-}=0$.
 \end{lem*}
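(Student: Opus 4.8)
The claim is that the annihilator in $\tilde\fg_0=\sl(n,\H)$ of the special curvature vector $w\in\V^{\rm II}_{\theta_{max}}$ has no negatively graded part with respect to the grading element $Z'$. My plan is to work at the level of Cartan/restricted-root data, using the explicit highest-weight description $\V^{\rm II}=S^3_\C\H^{*n}\odot\H^n$ with highest weight $\omega_1+3\omega_{2n-1}$ (after flipping the Satake diagram). Since $w$ lies in the top $Z'$-eigenspace $\ker\fh_+$, and $\fh$ is the parabolic $\tilde P_{2,2n-2}$ (resp.\ $\tilde P_2$ for $n=2$), the vector $w$ should be, up to the $\fh_0$-action, the highest weight vector of $\V^{\rm II}$; the main preliminary step is to verify this — that $\ker\fh_+$ is an irreducible $\fh_0$-module whose highest weight (for the appropriate positive system) is $\omega_1+3\omega_{2n-1}$, so that every $[w]$ in the closed minimal orbit with $w$ of pure top grading can be taken to be the highest weight vector. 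This reduces the lemma to a statement about the highest weight vector of an irreducible module.

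For the highest weight vector $w$, the annihilator's negative part $\mathfrak{ann}(w)_-$ is spanned by negative restricted root vectors $X_{-\alpha}$ (and their quaternionic companions) with $X_{-\alpha}\cdot w=0$. The standard fact is that $X_{-\alpha}$ kills the highest weight vector precisely when $\alpha$ is a simple root orthogonal to the highest weight, i.e.\ when the corresponding coefficient of the highest weight is zero AND $X_{-\alpha}$ is ``simple'' in the Levi of $\fh$; more generally one argues weight by weight. Concretely: $[Z',X_{-\alpha}]=-X_{-\alpha}$ forces $\alpha$ to be one of the roots defining the grading (a root crossing node $2$ or node $2n-2$ with total $Z'$-degree $-1$), i.e.\ $X_{-\alpha}\in\fh_{-1}=(\tilde\fg_0)_{-1}$, and $X_{-\alpha}\cdot w$ lands in the $\fh_0$-submodule of $Z'$-degree $\theta_{max}-1$. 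One then checks this is nonzero. The cleanest way: since $w$ generates $\V^{\rm II}$ as a $\tilde\fg_0$-module and $\V^{\rm II}=\tilde\fg_0\cdot w = U((\tilde\fg_0)_-)\cdot w$, the map $(\tilde\fg_0)_{-1}\to\V^{\rm II}_{\theta_{max}-1}$, $X\mapsto X\cdot w$, has image generating $\V^{\rm II}_{\theta_{max}-1}$ as $\fh_0$-module; if this map had a kernel, that kernel would be $\fh_0$-invariant (as $\fh_0$ normalizes $\fh_{-1}$ and fixes $[w]$ up to scalar), hence $0$ or all of $(\tilde\fg_0)_{-1}$ by irreducibility of $(\tilde\fg_0)_{-1}$ under $\fh_0$ — but it cannot be everything, else $w$ would generate a proper submodule, contradicting irreducibility. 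Either there is a single irreducible piece $(\tilde\fg_0)_{-1}$ under $\fh_0$ or it breaks into two (the two ``arms'' at nodes $2$ and $2n-2$); in the latter case one repeats the argument on each piece and notes that at least one, indeed both, act nontrivially because node $1$ and node $2n-1$ both carry nonzero coefficients ($1$ and $3$) in the highest weight $\omega_1+3\omega_{2n-1}$ — these are exactly the nodes adjacent to the crossed nodes $2$ and $2n-2$, so the relevant $\sl_2$-triples act nontrivially on $w$.

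The main obstacle I anticipate is bookkeeping with the \emph{quaternionic} (real-form) structure: $\tilde\fg_0=\sl(n,\H)$ is a real form, so ``root vectors'' must be read as restricted root spaces (which are $\H$- or $\R$-dimensional, not $1$-dimensional), and one must ensure the irreducibility arguments are applied to the \emph{restricted} representation and that the complex structure $I$ used in writing $S^3_\C\H^{*n}\odot\H^n$ is handled consistently. In particular, $\fh_{-1}$ as a quaternionic space is irreducible under $\fh_0$ only after accounting for the $\sp(1)_{\rm left}$ and $\sp(1)_{\rm right}$ actions (which act on the two tensor factors $\H^{*n}$ and $\H^n$ respectively). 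I would handle this by passing to the complexification $\sl(2n,\C)$ with parabolic $P_{2,2n-2}$, running the (purely combinatorial, Kostant-style) highest-weight computation there to conclude $\mathfrak{ann}_\C(w)_-=0$, and then using the general principle — already invoked in the proof of Lemma \ref{minorbitisclosed} — that $\mathfrak{ann}(w)=\mathfrak{ann}_\C(w)\cap\tilde\fg_0$, so that vanishing of the complex negative part forces vanishing of the real one. This also neatly explains why no separate $n=2$ argument is needed beyond noting the grading has only $s\in\{-1,0,1\}$ there.
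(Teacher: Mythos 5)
There is a genuine gap, and it lies in your preliminary reduction to the highest weight vector. The lemma is needed for an \emph{arbitrary} nonzero $w\in\V^{\rm II}_{\theta_{max}}$: since the Borel fixed point theorem is unavailable over $\R$ (this is exactly the difficulty this section of the paper is designed to circumvent), one does not know in advance which element of the top graded piece represents a minimal orbit, so the upper bound requires $\mathfrak{ann}(w)_{-}=0$ for \emph{every} nonzero $w$ of pure top grading. Your justification of the reduction --- that $\ker\fh_+$ is an irreducible $\fh_0$-module, hence any such $w$ may be taken to be the highest weight vector --- does not work: real irreducibility does not give projective transitivity. Here $\V^{\rm II}_{\theta_{max}}$ has real dimension $8$, while the part of $\fh_0$ acting nontrivially on it is essentially the compact $\sp(1)\oplus\sp(1)$ (plus scalars), so $P\V^{\rm II}_{\theta_{max}}$ contains infinitely many orbits (the paper's next lemma distinguishes orbits of dimensions $5$ and $6$), all of them closed since the group is compact; the real form of the highest weight vector generates just one of them. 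For the same reason your ``cleanest way'' step fails: the kernel of $(\tilde\fg_0)_{-1}\ni X\mapsto X\cdot w$ is invariant only under the stabilizer of $[w]$ in $\fh_0$ and under the part of $\fh_0$ acting trivially on the top piece (e.g.\ $\sl(n-2,\H)$), not under all of $\fh_0$; the assertion that $\fh_0$ fixes $[w]$ up to scalar is false. The complexification fallback inherits the same gap: the Kostant-style combinatorics computes the annihilator of the \emph{complex highest weight vector}, whereas the actual real element $w$ is not a weight vector, so $\mathfrak{ann}_\C(w)$ is not given by that computation, and the identity $\mathfrak{ann}(w)=\mathfrak{ann}_\C(w)\cap\tilde\fg_0$ helps only if you know $\mathfrak{ann}_\C(w)$ for the actual $w$.

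The paper's proof avoids all of this by arguing for arbitrary nonzero $w$ of top grading, using only the two subalgebras known to annihilate every such $w$, namely $\fh_+$ and $\sl(n-2,\H)$: a nonzero element of $\mathfrak{ann}(w)_{-1}$ forces one of the two $\sl(n-2,\H)$-irreducible pieces of $(\tilde\fg_0)_{-1}$ (first column, respectively last row) entirely into the annihilator; bracketing that piece with $\fh_1\subset\mathfrak{ann}(w)$ then forces $\sp(1)_{\rm left}$ or $\sp(1)_{\rm right}$ into the annihilator, which is impossible because neither $\sp(1)$ factor acts with a trivial summand on $\V^{\rm II}_{\theta_{max}}$; finally $\mathfrak{ann}(w)_{-2}=0$ follows by bracketing with $\fh_1$ again. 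To rescue your strategy you would have to prove separately that a minimal orbit meets $\V^{\rm II}_{\theta_{max}}$ only along the $G_0$-orbit of (the real form of) the highest weight vector --- but that is essentially part of what this section is trying to establish, so it cannot be assumed.
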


 \begin{proof}
Let us consider the case $n>2$ (The case $n=2$ is a simple adaptation). The $\fh_0$ module $(\tilde\fg_0)_{-1}$ is reducible -- it is the sum of two irreps: $(\tilde\fg_0)_{-1}'\equiv$ (the first column in the matrix from $\mathfrak{sl}(n,\H)$
with the first and last entries zero) and $(\tilde\fg_0)_{-1}''\equiv$ (the last row in the matrix from $\mathfrak{sl}(n,\H)$ with the first and last entries zero). This is also true when we restrict to $\mathfrak{sl}(n-2,\H)\subset\fh_0$.

Let $q=q'+q''\in\mathfrak{ann}(w)_{-1}$ be non-zero. Then, since
$\mathfrak{sl}(n-2,\H)\subset\mathfrak{ann}(w)$ and both $(\tilde\fg_0)_{-1}'$ and $(\tilde\fg_0)_{-1}''$
are $\mathfrak{sl}(n-2,\H)$-modules, we conclude that at least one of $(\tilde\fg_0)_{-1}'$ and $(\tilde\fg_0)_{-1}''$ is entirely in the annihilator. But then, since $\fh_1\subset\mathfrak{ann}(w)$
(and computing the brackets), we conclude that at least one of
$\mathfrak{sp}(1)_{\rm left}$ and $\mathfrak{sp}(1)_{\rm right}$
is entirely in $\mathfrak{ann}(w)$, which is impossible.

Thus $\mathfrak{ann}(w)_{-1}=0$. If there is $0\neq q\in\mathfrak{ann}(w)_{-2}$, then taking brackets
with $\fh_1\subset\mathfrak{ann}(w)$ we get a non-zero element in $\mathfrak{ann}(w)_{-1}$, which is impossible
by the above. This contradiction proves the claim.
 \end{proof}

Now due to the highest weight, $\mathfrak{sl}(n-2,\H)$ acts trivially on $\V^{\rm II}$, and so
from ${\fh_0}^{ss}$ only $\frak{sp}(1)^2=\mathfrak{sp}(1)_{\rm left}\oplus\mathfrak{sp}(1)_{\rm right}$ acts non-trivially. With respect to this algebra the module $\V^{\rm II}_{\theta_{max}}$ has highest weight
$\omega_1+3\omega_2$, and as an irreducible {\em real} module it has real dimension $8$.
We want to maximize the annihilator of an element $w$.

 \begin{lem*}
Dimension of the $\frak{sp}(1)^2$-orbit through a non-zero element $w\in\V^{\rm II}_{\theta_{max}}$ is
either $5$ or $6$. Thus nontrivial annihilator can be only $\frak{so}(2)\subset\frak{sp}(1)^2$ of $\dim=1$.
 \end{lem*}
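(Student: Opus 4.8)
The plan is to analyze the $\mathfrak{sp}(1)^2$-action on the $8$-dimensional real irreducible module $\V^{\rm II}_{\theta_{max}}$ with highest weight $\omega_1+3\omega_2$ and determine the possible stabilizer dimensions. Since $\dim\V^{\rm II}_{\theta_{max}}=8$ and the orbit through $[w]$ lives in the $7$-dimensional projective space $P\V^{\rm II}_{\theta_{max}}$, while $\dim(\mathfrak{sp}(1)^2)=6$, the orbit dimension is at most $6$ and the stabilizer $\mathfrak{ann}(w)\cap(\mathfrak{sp}(1)^2)$ has dimension at least $0$. So we must rule out stabilizers of dimension $2$ and $3$ (dimension $\geq 4$ is clearly impossible since it would force a whole $\mathfrak{sp}(1)$-factor, or close to it, into the annihilator, contradicting irreducibility or effectivity as in the previous lemmas).

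First I would identify the module concretely: with respect to $\mathfrak{sp}(1)^2$ the complexification is $\C^2\boxtimes S^3\C^2$, where the left factor is the standard $2$-dimensional $\mathfrak{sl}(2,\C)$-representation on $\mathfrak{sp}(1)_{\rm left}^\C$ and the right factor is the $4$-dimensional irreducible on $\mathfrak{sp}(1)_{\rm right}^\C$; this has complex dimension $8$, and $\V^{\rm II}_{\theta_{max}}$ is a real form of half the complex dimension — wait, more precisely it is a real module of real dimension $8$ whose complexification is this $8$-complex-dimensional module (quaternionic type), so I would instead describe $\V^{\rm II}_{\theta_{max}}$ directly as $\H\otimes_{\sp(1)}(\text{something})$, or just work with the real $8$-dimensional picture: $\mathfrak{sp}(1)_{\rm left}$ acting via the standard action of $\mathfrak{sp}(1)$ on $\H\cong\R^4$ (left multiplication) and $\mathfrak{sp}(1)_{\rm right}$ acting via the $4$-dimensional real irreducible $S^3$ representation on the other $\R^4$ factor, the module being $\H\otimes_\R\R^4$ or the appropriate tensor that is irreducible of real dimension $8$. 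The key structural point is that the two $\mathfrak{sp}(1)$-factors commute and act on "independent" tensor legs.

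Next, the main computation: for generic $w$, I would show the stabilizer in each $\mathfrak{sp}(1)$-factor separately is at most one-dimensional, and that the joint stabilizer in $\mathfrak{sp}(1)^2$ is at most one-dimensional too. The cleanest route is to note that a nonzero element of the standard $2$-dimensional representation of $\mathfrak{sp}(1)_{\rm left}$ (acting on projectivized space) has a $1$-dimensional isotropy subalgebra — a copy of $\mathfrak{so}(2)$ — while a generic element of $S^3$ for $\mathfrak{sp}(1)_{\rm right}$ has trivial isotropy, and special ones (the "quintessential" configurations corresponding to a triple zero, i.e.\ highest-weight-type vectors) have $1$-dimensional isotropy. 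By examining how the tensor decomposes and choosing $w$ to make both the left-leg projection and right-leg structure as degenerate as possible simultaneously, the best one can do is a common $\mathfrak{so}(2)$, and this forces orbit dimension $6-1=5$; for generic $w$ the orbit is the full $6$-dimensional one. I would verify that dimension $2$ and $3$ stabilizers are genuinely impossible by the representation-theoretic fact that $\mathfrak{sp}(1)^2$ has no subalgebras of dimension $2$ or $3$ other than $\mathfrak{sp}(1)$ embedded diagonally or as a factor (and $\mathfrak{su}(2)$ has no $2$-dimensional subalgebra), combined with checking that no such larger subalgebra can annihilate a nonzero vector in this particular module. Actually the subalgebras of $\mathfrak{sp}(1)^2$ of positive dimension are: the two factors $\mathfrak{sp}(1)_{\rm left}$, $\mathfrak{sp}(1)_{\rm right}$; diagonal-type $\mathfrak{sp}(1)$'s; $\mathfrak{so}(2)$'s sitting inside one factor or diagonally across; and products/sums thereof. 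One checks directly that no $\mathfrak{sp}(1)$ (of any embedding) annihilates a nonzero $w$ — for the factor embeddings this is immediate from the tensor structure, for diagonal embeddings it follows since the restriction to a diagonal $\mathfrak{sp}(1)$ of $\C^2\boxtimes S^3\C^2$ is $\C^2\otimes S^3\C^2 = S^4\C^2\oplus S^2\C^2$, which has no trivial summand — hence no invariant vector, hence no $3$-dimensional stabilizer; and then a $2$-dimensional stabilizer is impossible since $\mathfrak{sp}(1)^2$ has no $2$-dimensional subalgebras.

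The main obstacle will be the bookkeeping to confirm that the maximal isotropy really is exactly $\mathfrak{so}(2)$ and that it is realized (so that orbit dimension $5$ actually occurs and is the minimum) rather than, say, always $6$. Concretely, I would exhibit an explicit $w$ — e.g.\ take the left leg to be a fixed nonzero vector $e\in\H$ stabilized by $\mathfrak{so}(2)=\mathfrak{u}(1)\subset\mathfrak{sp}(1)_{\rm left}$, and the right leg to be the highest weight vector of $S^3$ stabilized by the \emph{same} circle in $\mathfrak{sp}(1)_{\rm right}$ after suitably aligning the two Cartan subalgebras (possible because the circle subgroups can be matched up). Verifying that the diagonal $\mathfrak{so}(2)$ thus obtained indeed fixes $[w]$, and that nothing bigger does, pins down the orbit dimension as $5$ for this $w$ and $6$ generically, completing the proof. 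The statement then follows: orbit dimension is $5$ or $6$, and the only admissible nontrivial annihilator is $\mathfrak{so}(2)$ of dimension $1$.
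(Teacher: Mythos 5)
The step that fails is your exclusion of a $2$-dimensional annihilator. You assert that $\mathfrak{sp}(1)^2$ has no $2$-dimensional subalgebras, but it does: the maximal torus $\mathfrak{so}(2)\oplus\mathfrak{so}(2)$ and all its conjugates. So, as written, an orbit of dimension $4$ is not ruled out and the lemma is not proved. This is precisely the case the paper's proof is built around: since $\mathfrak{sp}(1)^2$ is compact of rank $2$, every $2$-dimensional subalgebra is abelian (a compact algebra contains no copy of the non-abelian $2$-dimensional algebra), hence is a maximal torus $\mathbb{T}^2$, unique up to conjugacy; and $\mathbb{T}^2$ annihilates no non-zero vector of $\V^{\rm II}_{\theta_{max}}$ because the torus weights of $\C^2\otimes_\C S^3_\C\C^2$ are $(\pm1,\pm3)$ and $(\pm1,\pm1)$, none of which is zero. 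Inserting this weight argument closes the gap. Your handling of the $3$-dimensional case (factors act without invariants; a diagonal $\mathfrak{su}(2)$ restricts to $S^4\oplus S^2$, which has no trivial summand) is correct, and is in fact a necessary separate check, since a diagonal $\mathfrak{sp}(1)$ contains no $2$-torus and so cannot be dismissed by the torus argument alone.

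Two smaller inaccuracies in your realization of the $\mathfrak{so}(2)$: the annihilator in $\mathfrak{sp}(1)_{\rm left}$ of a non-zero vector of the standard module is trivial, not $\mathfrak{so}(2)$ (only the complex line has $\mathfrak{u}(1)$ isotropy, and for a compact action stabilizing a real line is the same as annihilating the vector, since skew-symmetric operators have no non-zero real eigenvalues); and the surviving circle is not an arbitrary ``aligned'' diagonal but the specific combination $3e_{\rm left}-e_{\rm right}$ of Cartan generators, the kernel of the weight $(1,3)$ of the (real form of the) highest weight vector, which is how the paper exhibits the dimension-$1$ annihilator and hence the orbit of dimension $5$.
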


 \begin{proof}
The complex $\frak{sp}(1)^2$-module $\V^{\rm II}_{\theta_{max}}\otimes\C$ of the highest weight $\omega_1+3\omega_2$
is the outer product of the irreducible $\sp(1)$--modules $\C^2$ and $S^3_\C\C^2$. The algebra $\sp(1)^2$ is a compact real form of rank $2$, therefore the subalgebra of dimension $2$ is a maximal torus $\mathbb{T}^2$ that is unique up to conjugation. Any subalgebra of dimension $>2$ contains such a maximal torus, but $\mathbb{T}^2$ does not preserve any vector in the module. Therefore the maximal possible annihilator dimension is $1$, and any subalgebra of dimension $1$ is isomorphic to $\mathfrak{so}(2)$. This is realized by annihilator of the highest weight vector,
and its real part has the same annihilator. This annihilator is generated by $3e_{\rm left}-e_{\rm right}$,
where $e_{\rm left}$ and $e_{\rm right}$ are generators of Cartan subalgebras in the two ideals of $\sp(1)^2$.
 \end{proof}

 \begin{cor*}\label{corr1}
The largest annihilator of a non-zero $w\in\V^{\rm II}_{\theta_{max}}$ with respect to the action of $\fg_0$
is $\frak{sp}(1)\oplus\bigl(\frak{so}(2)\oplus\R\oplus\frak{gl}(n-2,\H)\bigr)\ltimes\fh_+$,
where $\R$ is generated by a suitable linear combination of the grading elements $Z$ and $Z'$ of $\fg$
and $\tilde \fg_0$.
 \end{cor*}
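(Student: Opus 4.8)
The plan is to assemble Corollary \ref{corr1} from the ingredients already in place. Since the $\mathfrak{sp}(1)$-summand of $\fg_0$ acts trivially on $\V^{\rm II}$, it lies in every annihilator; so it is enough to analyse the action of $\tilde\fg_0=\mathfrak{sl}(n,\H)$ and add this $\mathfrak{sp}(1)$ back at the end. By Lemma \ref{minorbitisclosed} a $\tilde G_0$-orbit of minimal dimension in $P\V^{\rm II}$ is closed, and by Lemma \ref{expandclosedorbit} such an orbit is represented by a vector $w\in\V^{\rm II}_{\theta_{max}}=\ker\fh_+$; the largest annihilator will in fact be attained by (the real analogue of) the highest weight vector, which lies in $\V^{\rm II}_{\theta_{max}}$ and has a closed orbit. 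Thus from now on $w\in\V^{\rm II}_{\theta_{max}}$, and the task is to compute $\mathfrak{ann}(w)\cap\fg_0$ and maximise it.

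As $w$ is homogeneous for $Z'$, the subalgebra $\mathfrak{ann}(w)\subseteq\tilde\fg_0$ is graded. We already know $\mathfrak{ann}(w)_+=\fh_+$, and $\mathfrak{ann}(w)_-=0$ by Lemma \ref{nonegstab-II}, so only $\mathfrak{ann}(w)_0\subseteq\fh_0=\mathfrak{sp}(1)_{\rm left}\oplus\mathfrak{gl}(n-2,\H)\oplus\mathfrak{sp}(1)_{\rm right}\oplus\R Z'$ remains. The module $\V^{\rm II}_{\theta_{max}}$ is $\fh_0$-irreducible with highest weight $\omega_1+3\omega_2$ concentrated on the $\mathfrak{sp}(1)^2$-nodes, so $\mathfrak{sl}(n-2,\H)$ acts trivially on all of it and lies in $\mathfrak{ann}(w)_0$; and by the preceding Lemma the $\mathfrak{sp}(1)^2$-part of $\mathfrak{ann}(w)_0$ is at most $\mathfrak{so}(2)=\langle 3e_{\rm left}-e_{\rm right}\rangle$, with equality exactly for the highest weight vector. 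It remains to handle the abelian scaling directions $Z$ (the $\fg$-grading element, acting on $\V^{\rm II}$ by $2$), $Z'$ (the grading element of $\fh$), and the central element $Z''$ of the middle block $\mathfrak{gl}(n-2,\H)$: each acts on the irreducible $\V^{\rm II}_{\theta_{max}}$ by a scalar, so the scalar action is a single linear functional on $\langle Z,Z',Z''\rangle$, and its two-dimensional kernel is the part of these directions contained in $\mathfrak{ann}(w)$. One checks that this kernel has a basis consisting of a combination of $Z$ and $Z'$ alone — which we take as the summand $\R$ — together with one combination involving $Z''$, which completes $\mathfrak{sl}(n-2,\H)$ to a copy of $\mathfrak{gl}(n-2,\H)$.

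Assembling, $\mathfrak{ann}(w)\cap\fg_0=\mathfrak{sp}(1)\oplus\bigl(\mathfrak{so}(2)_w\oplus\R\oplus\mathfrak{gl}(n-2,\H)\bigr)\oplus\fh_+$, where $\mathfrak{so}(2)_w\subseteq\mathfrak{so}(2)$ is the $\mathfrak{sp}(1)^2$-stabiliser of $w$; since $\fh_+$ is the nilradical of $\fh$ and commutes with the extra $\mathfrak{sp}(1)$, it is an ideal, giving the semidirect product. Every summand except $\mathfrak{so}(2)_w$ occurs for every non-zero $w\in\V^{\rm II}_{\theta_{max}}$, and $\mathfrak{so}(2)_w$ is largest at the highest weight vector; hence the largest annihilator is as stated.

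The step I expect to be most delicate is the bookkeeping of the scaling directions $Z$, $Z'$, $Z''$: one must pin down the scalars by which they act on $\V^{\rm II}_{\theta_{max}}$ — conveniently through the highest weight $\omega_1+3\omega_{2n-1}$ and an explicit choice of grading element — and then check that the resulting two-dimensional annihilator really does split as $\R\oplus(\text{center of }\mathfrak{gl}(n-2,\H))$ with the $\R$-factor expressible through $Z$ and $Z'$ only, rather than being forced to enlarge or shrink the middle $\mathfrak{gl}$-factor; one should also confirm that the maximal value of $\dim\mathfrak{so}(2)_w$ is realised on a minimal orbit (it is, the highest weight vector having a closed orbit).
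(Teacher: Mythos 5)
Your proposal is correct and follows essentially the same route as the paper: reduce modulo the trivially acting $\frak{sp}(1)$ and the graded structure of $\mathfrak{ann}(w)$, invoke Lemmas \ref{minorbitisclosed}, \ref{expandclosedorbit} and \ref{nonegstab-II} together with the $\frak{sp}(1)^2$-orbit lemma, and then account for the central directions. Your explicit treatment of the scaling elements $Z,Z',Z''$ as the kernel of a single linear functional is just a slightly more articulated version of what the paper does implicitly through its list of compensated generators (e.g.\ $1_{n,n}+3\cdot 1_{1,1}$ and $Z+1_{1,1}-1_{n,n}$), and it correctly reproduces the splitting $\frak{so}(2)\oplus\R\oplus\frak{gl}(n-2,\H)$.
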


We realize this annihilator in complex notations as follows. Let $\H^n=\H_1\oplus\ldots\oplus\H_n$ and
$v_m$ be the standard basis of $\H_m$ ($v=1,i,j,k$), $v^*_m$ be the real dual basis, $m=1,\dots,n$. Denote by $q_{r,s}\in\frak{sl}(n,\H)$ the matrix that contains $q$ on the $r$-th row
and $s$-th column, and that contains zeros elsewhere. The action on
$\H^n$ is $q_{r,s}\cdot v_t=(qv)_r\,\delta_{s,t}$ and the action on $\H^{*n}$ is minus the transpose.

Let $w=1_n^{*3}\otimes1_1\in\V^{\rm II}$ (this element is contained in the Cartan product
because $\langle 1_n^*,1_1 \rangle=0$, so the tensor contractions yield zero).
Then $\mathfrak{ann}(w)$ in $\tilde\fg_0$ is generated by $q_{r,s}$ for $1\le r<n$, $1<s\leq n$ ($q=1,i,j,k$),
where if $r=s$ and $q$ real we have to compensate by $1_{n,n}+3\cdot1_{1,1}$,
and the element $i_{n,n}+3\cdot i_{1,1}$. To get the annihilator in $\fg_0$ we add $\sp(1)$ and the element
$Z+1_{1,1}-1_{n,n}$.

\begin{rem*}\label{realtensorproj}
The element $w=1_n^{*3}\otimes1_1\in\V^{\rm II}$ is actually written in complex tensor notation. To get it as a real tensor, one should project the corresponding real tensor product to its complex linear submodule, and then take another projection to a self-conjugate submodule with respect to an invariant complex conjugation. We note that the first projection depends on the choice of invariant complex structure. We choose $i\in\text{Im}(\H)$. Then
\begin{align*}
\text{proj}_i (1_n^{*3}\otimes1_1) = \tfrac{1}{8}(&1_n^{\ast 3}\otimes 1_1+3\cdot1_n^{\ast 2}i_n^{\ast}\otimes i_1-3\cdot1_n^{\ast} i_n^{\ast 2}\otimes 1_1-i_n^{\ast 3}\otimes i_1).
\end{align*}
A complex conjugation can be chosen as (right) multiplication by $j$ of all tensor factors of a monomial, extended by linearity. A stable element is then given by the projector $\text{proj}_{\text{sc}}$ to the self-conjugate submodule.
\begin{align*}
\text{proj}_{\text{sc}}(\text{proj}_i (1_n^{*3}\otimes1_1)) = \tfrac{1}{16}(&1_n^{\ast 3}\otimes 1_1+3\cdot1_n^{\ast 2}i_n^{\ast}\otimes i_1-3\cdot1_n^{\ast} i_n^{\ast 2}\otimes 1_1-i_n^{\ast 3}\otimes i_1+\\
+&j_n^{\ast 3}\otimes j_1+3\cdot j_n^{\ast 2}k_n^{\ast}\otimes k_1-3\cdot j_n^{\ast} k_n^{\ast 2}\otimes j_1-k_n^{\ast 3}\otimes k_1).
\end{align*}
Note that the symmetric tensor products come with factors of $\frac{1}{3}$, which will cancel out the factors of 3 in our formula, so that e.g. the coefficient of $1_n^{\ast}\otimes 1_n^{\ast} \otimes i_n^{\ast}\otimes i_1$ is $\frac{1}{16}$. The tensor $\text{proj}_{\text{sc}}(\text{proj}_i (1_n^{*3}\otimes1_1))$ has the required annihilator, index symmetries, vanishing contraction, and so can serve as a generator of the real curvature module under the action of $\sl(n,\H)$.
\end{rem*}

Taking the semi-direct product of the annihilator in $\fg_0$ and the Abelian algebra $\H^n$, we get the graded algebra $\frak{a}^{\psi_{\rm II}}$ of maximal dimension provided $0\neq\psi_{\rm II}\in\V^{\rm II}$:
 $$
\frak{a}^{\psi_{\rm II}} = \Big(\sp(1)\oplus\bigl(\frak{so}(2)\oplus\R\oplus\mathfrak{gl}(n-2,\H)\bigr)\ltimes
\mathfrak{heis}(8n-12,\H)\Big)\ltimes\H^n.
 $$
This will be shown to be associated to the filtration on the symmetry algebra $\frak{s}$ of a geometry
with $\ka_2\neq0$ in the next section.

 \subsection{Minimal orbits in the torsion module}
The irreducible $G_0$-submodule $H^2_1\subset H^2(\fg_{-},\fg)$ will be denoted in this subsection
by $\V^{\rm I}$ to indicate its homogeneity $1$. It is a quaternionic module.
From the weighted Dynkin diagram (our Satake diagram in
Section \ref{Ssetup} with all nodes white) we see that the
the complexification $\V^{\rm I}\otimes\C$ is an outer product of the $\sp(1)$-module $S^3_\C\H$ and the $\sl(n,\H)$-module $\Lambda^2_\C\H^{*n}\odot\H^n$ (this Cartan product is the kernel of the contraction $\Lambda^2_\C\H^{*n}\otimes_\C \H^n\rightarrow \H^{*n}$). We refer to \cite[Proposition 4.1.8]{parabook}
for the description of $\V^{\rm I}$ as the real module.

One could expect that a generator of a minimal orbit can be realized as the tensor product of
such generators in each factor, and this is indeed the case.
A minimal non-zero $\sp(1)$-orbit in $S^3_\C\H$ can have dimension no less than $2$, because the maximal
proper subalgebra of $\sp(1)$ is of dimension 1. This means that any element of the torsion module which achieves maximal stabilizer in $\sl(n,\H)$ and a stabilizer of dimension 1 in $\sp(1)$ generates a minimal orbit.

Thus, we analyze the torsion module under the action of $\tilde\fg_0=\sl(n,\H)$ alone, which yields
 $$
\V^{\rm I}=\C^4\otimes_\C \Lambda^2_\C\H^{*n}\odot \H^n,
 $$
where the first factor $\C^4$ is a trivial module. This decomposes as a direct sum of modules isomorphic to $\Lambda^2_\C\H^{*n}\odot \H^n$. One can always find a minimal orbit in a completely reducible module which is contained in an irreducible summand. Moreover this orbit is closed by Lemma \ref{minorbitisclosed} and we again utilize Lemma \ref{expandclosedorbit} to ensure that the minimal orbit has an element in $\ker\fh_+$.
Using the grading element $Z'$ of $\tilde\fg_0$ we identify $\V^{\rm I}_{\theta_{max}}=\ker\fh_+$.

Since the module $\Lambda^2_\C\H^{*n}\odot \H^n$ has highest weight $\omega_1+ \omega_{2n-2}$ and is quaternionic, so $\dim_\R\V^{\rm I}_{\theta_{max}}=4$. Taking $0\neq w\in\V^{\rm I}_{\theta_{max}}$ (of pure grade),
its annihilator is a graded algebra (containing $\fh_+$).

 \begin{lem*}\label{nonegstab-I}
We have: $\mathfrak{ann}(w)\cap(\tilde\fg_0)_{-}=0$, i.e.\ $\mathfrak{ann}(w)_{-}=0$.
 \end{lem*}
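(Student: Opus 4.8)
The argument runs parallel to that of Lemma \ref{nonegstab-II}, but now the two copies of $\sp(1)$ in $\fh_0$ play asymmetric roles. I treat $n>2$ first; the case $n=2$ is a straightforward adaptation. The first step is to determine how $\fh_0=\sp(1)_{\rm left}\oplus\mathfrak{gl}(n-2,\H)\oplus\sp(1)_{\rm right}\oplus\R Z'$ acts on the top graded piece $\V^{\rm I}_{\theta_{max}}=\ker\fh_+$. Since $\Lambda^2_\C\H^{*n}\odot\H^n$ has highest weight $\omega_1+\omega_{2n-2}$ and $\tilde P_{2,2n-2}$ crosses the nodes $2$ and $2n-2$, the restriction of this weight to the Levi factor of $\fh$ is supported only at node $1$; hence $\mathfrak{sl}(n-2,\H)$ and $\sp(1)_{\rm right}$ annihilate $\V^{\rm I}_{\theta_{max}}$, while $\sp(1)_{\rm left}$ acts on it as on the irreducible real quaternionic module $\H$, so that $\mathfrak{ann}(w)\cap\sp(1)_{\rm left}=0$ for every $0\neq w$. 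Moreover the central element $Z_3'=\op{diag}(0,-\tfrac1{n-2},\dots,-\tfrac1{n-2},1)$, dual to node $2n-2$, acts by the nonzero scalar $-2$, so $Z_3'\notin\mathfrak{ann}(w)$. On the other hand $\mathfrak{sl}(n-2,\H)\oplus\sp(1)_{\rm right}\oplus\fh_+\subseteq\mathfrak{ann}(w)$.

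Since $w$ has pure $Z'$-grade, $\mathfrak{ann}(w)$ is graded and $\mathfrak{ann}(w)_{-1}$ is a module over $\mathfrak{sl}(n-2,\H)\oplus\sp(1)_{\rm right}\subseteq\mathfrak{ann}(w)_0$. With respect to this subalgebra the two summands of $(\tilde\fg_0)_{-1}$ — namely $(\tilde\fg_0)_{-1}'$ (the first column, first and last entries zero) and $(\tilde\fg_0)_{-1}''$ (the last row, first and last entries zero) — are irreducible and non-isomorphic (the standard $\mathfrak{sl}(n-2,\H)$-module with $\sp(1)_{\rm right}$ trivial, versus the dual one with $\sp(1)_{\rm right}$ nontrivial; this distinction also disposes of the potentially delicate case $n=3$, where $\mathfrak{sl}(1,\H)=\sp(1)$). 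Hence if $0\neq q=q'+q''\in\mathfrak{ann}(w)_{-1}$, then at least one of the two blocks lies entirely in $\mathfrak{ann}(w)$. If $(\tilde\fg_0)_{-1}'\subseteq\mathfrak{ann}(w)$, I bracket it with the first-row part of $\fh_1\subseteq\mathfrak{ann}(w)$; the brackets $[q_{i,1},p_{1,i}]$ have one component in $\mathfrak{sl}(n-2,\H)$ and, as $q,p$ vary, the remaining component fills the whole $(1,1)$-slot, so taking $p$ imaginary forces $\sp(1)_{\rm left}\subseteq\mathfrak{ann}(w)$ — a contradiction. If instead $(\tilde\fg_0)_{-1}''\subseteq\mathfrak{ann}(w)$, bracketing with the last-column part of $\fh_1$ gives $[q_{n,j},p_{j,n}]$, with components in $\mathfrak{sl}(n-2,\H)$, in $\sp(1)_{\rm right}$ and along $Z_3'$; taking $q=p=1$ isolates $Z_3'$ modulo $\mathfrak{sl}(n-2,\H)$, whence $Z_3'\in\mathfrak{ann}(w)$ — again a contradiction. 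Thus $\mathfrak{ann}(w)_{-1}=0$, and then any $0\neq X\in\mathfrak{ann}(w)_{-2}=(\tilde\fg_0)_{-2}$ would give $[X,\fh_1]\subseteq\mathfrak{ann}(w)_{-1}=0$, while $[q_{n,1},p_{1,j}]$ is a nonzero element of $(\tilde\fg_0)_{-1}''$ for suitable $p$; hence $\mathfrak{ann}(w)_{-2}=0$ and $\mathfrak{ann}(w)_{-}=0$. For $n=2$ the middle block is absent, $(\tilde\fg_0)_{-}=(\tilde\fg_0)_{-1}$ is a single irreducible block, and the bracket of any nonzero element of it with $\fh_1$ again forces $\sp(1)_{\rm left}\subseteq\mathfrak{ann}(w)$, which is impossible.

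The main obstacle is the asymmetry absent in Lemma \ref{nonegstab-II}: there, the bracket of either negative block with $\fh_1$ lands in a diagonal slot carrying an $\sp(1)$-factor acting nontrivially on the curvature module, whereas here the bracket of the last-row block lands in the $(n,n)$-slot, where $\sp(1)_{\rm right}$ acts trivially and so carries no obstruction. The contradiction in that case must instead be extracted from the central element $Z_3'$, so the essential extra input is the computation that $Z_3'$ — equivalently, the central character encoded by the coefficient $1$ at node $2n-2$ — acts by a nonzero scalar on $\V^{\rm I}_{\theta_{max}}$, together with the routine check that the relevant brackets genuinely surject onto the claimed diagonal slots rather than landing entirely inside $\mathfrak{sl}(n-2,\H)$.
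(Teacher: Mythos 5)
Your proof is correct, but at the decisive step it runs along a different track than the paper's. The paper, after the same reduction to whole blocks (using that $\sl(n-2,\H)$ lies in the annihilator), simply evaluates the representative elements $q'_{s,1}$ and $q''_{n,s}$ on the explicit top-grade tensor $1^*_n\wedge j^*_n\otimes v_1$ and checks by a direct tensor computation that neither annihilates it; the grade $-2$ step is then the same bracketing with $\fh_1$ that you use. You instead avoid computing the action of negative-grade elements on $w$ altogether and transplant the bracket argument of Lemma \ref{nonegstab-II}: assuming a whole block annihilates $w$, you bracket it with $\fh_1$ to force a forbidden element of $\fh_0$ into $\mathfrak{ann}(w)$ --- $\sp(1)_{\rm left}$ for the column block, and, since $\sp(1)_{\rm right}$ genuinely does lie in the annihilator in the torsion case (the asymmetry you correctly single out), the central element $Z_3'$ acting by the nonzero scalar $-2$ for the row block. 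Both mechanisms are valid: yours uses only the $\fh_0$-module structure of $\V^{\rm I}_{\theta_{max}}$ and needs no explicit representative tensor, at the price of the extra central-character computation, while the paper's evaluation argument is shorter once $\V^{\rm I}_{\theta_{max}}=\{1^*_n\wedge j^*_n\otimes v_1\}$ is written down. A small bonus of your version: by letting $\sl(n-2,\H)\oplus\sp(1)_{\rm right}$ (rather than $\sl(n-2,\H)$ alone) act in the block-reduction, the two grade $-1$ summands are non-isomorphic even for $n=3$, which closes the loophole of a possible diagonal submodule that the paper's phrasing leaves open in that case.
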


 \begin{proof}
Let us consider the case $n>2$ (The case $n=2$ is a simple adaptation). First, we show that the evaluation map $(\tilde\fg_0)_{-1}\otimes\V^{\rm I}_{\theta_{max}}\to\V^{\rm I}$ is injective. Take any element $q=q'+q''\in(\tilde\fg_0)_{-1}=(\tilde\fg_0)_{-1}'\oplus(\tilde\fg_0)_{-1}''$,
where the latter splitting into irreps is the same as in the proof of Lemma \ref{nonegstab-II}.
Using the same argument as in this proof, given a non-zero annihilator element in one of these submodules,
we conclude (because $\sl(n-2,\H)$ is in the annihilator) that the whole submodule is in the annihilator.
So it is enough to check injectivity of the action on the two elements only, which are
$q'_{s,1}$ and $q''_{n,s}$ for $1<s<n$.

Notice that $\V^{\rm I}_{\theta_{max}}=\{1^*_n\wedge j^*_n\otimes v_1:v\in\H\}$.
If $q=q'_{s,1}$, then the action is $q\cdot(1^*_n\wedge j^*_n\otimes v_1)=1^*_n\wedge j^*_n\otimes(qv)_s\neq0$,
and if $q=q''_{n,s}$, then the action is
$q\cdot(1^*_n\wedge j^*_n\otimes v_1)=-(q''^*_s\wedge j^*_n+1^*_n\wedge (q''j)^*_s)\otimes v_1\neq0$.
Thus $\mathfrak{ann}(w)\cap(\tilde\fg_0)_{-1}=0$.

The rest mimics the proof of Lemma \ref{nonegstab-II}: if a non-zero annihilator element exists in
$(\tilde\fg_0)_{-2}$, then bracketing with $\fh_1$ we obtain a non-zero annihilator element in $(\tilde\fg_0)_{-1}$,
which is a contradiction.
 \end{proof}

Thus it remains to consider the action of $\fh_0$ on $\V^{\rm I}_{\theta_{max}}$.
Since in the semi-simple part $\fh^{ss}_0=\sp(1)_{\rm left}\oplus\frak{sl}(n-2,\H)\oplus\sp(1)_{\rm right}$
the last two summands are in the annihilator (because of the weight of the representation),
this reduces to considering $\V^{\rm I}_{\theta_{max}}$ as $\sp(1)=\sp(1)_{\rm left}$-module
(of the highest weight $\omega_1$). This is the standard left action of $\sp(1)$ on $\H$,
any element of $\sp(1)$ acts as a complex structure, and so this part gives no contribution to the annihilator
of any $0\neq w\in\V^{\rm I}_{\theta_{max}}$. Also, similar to the curvature module, a combination
of the grading elements acts trivially.

 \begin{cor*}\label{corr2}
The largest annihilator of a non-zero $w\in\V^{\rm I}_{\theta_{max}}$ with respect to the action of $\fg_0$ is
$\frak{so}(2)\oplus\bigl(\R\oplus\frak{gl}(n-2,\H)\oplus\frak{sp}(1)_{\rm right}\bigr)\ltimes\fh_+$,
where $\R$ is generated by a suitable linear combination of the grading elements $Z$ and $Z'$ of $\fg$ and $\tilde \fg_0$.
 \end{cor*}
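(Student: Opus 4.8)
\emph{Proof strategy.} The plan is to compute $\mathfrak{ann}(w)$ for one optimal vector $w$ and then check maximality. By Lemmas~\ref{minorbitisclosed} and~\ref{expandclosedorbit} a minimal orbit is realized inside $\ker\fh_+=\V^{\rm I}_{\theta_{max}}$, and, as anticipated in the discussion above, its generator can be taken to be a pure tensor $w=u\otimes z$: here $z=1^*_n\wedge j^*_n\otimes 1_1\in\V^{\rm I}_{\theta_{max}}$ lies in the $\sl(n,\H)$--factor $\Lambda^2_\C\H^{*n}\odot\H^n$, while $u$ is a highest weight vector of the factor $S^3_\C\H$ on which the outer $\sp(1)\subset\fg_0$ acts. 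Since $\sl(n,\H)$ acts trivially on $S^3_\C\H$ and the outer $\sp(1)$ acts trivially on $z$, the two actions commute and I would assemble $\mathfrak{ann}(w)$ from their separate contributions.

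The non-positive and the easy degree-zero parts are immediate. As $w\in\ker\fh_+$ we have $\fh_+\subset\mathfrak{ann}(w)$, while Lemma~\ref{nonegstab-I} gives $\mathfrak{ann}(w)\cap(\tilde\fg_0)_{-}=0$. In degree zero, $\mathfrak{sl}(n-2,\H)$ acts only on the middle indices $2,\dots,n-1$, which do not occur in $z$, and $\sp(1)_{\rm right}$ acts on $1^*_n\wedge j^*_n$ through $\Lambda^2_\C\H^*$, its trivial representation; both therefore kill $z$, and they kill $u$ because they lie in $\sl(n,\H)$. On the real torus spanned by $Z$, $Z'$ and the center of $\mathfrak{gl}(n-2,\H)$ the vector $w$ is an eigenvector, so its annihilator there is the codimension-one kernel of the weight---a two-dimensional space which the statement records as $\R$ (a combination of $Z$ and $Z'$) together with the center of $\mathfrak{gl}(n-2,\H)$. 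Altogether $\R\oplus\mathfrak{gl}(n-2,\H)\oplus\sp(1)_{\rm right}\subset\mathfrak{ann}(w)$.

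The crux is the joint contribution of the two remaining compact summands: $\sp(1)_{\rm left}\subset\fh_0$, acting on $z$ by the standard representation on $\H$ of weight $1$, and the outer $\sp(1)$, acting on $u$ by $S^3_\C\H$ of weight $3$. Each acts by an invariant complex structure, so neither annihilates $w$ on its own. I would complexify $\sp(1)\oplus\sp(1)_{\rm left}$: in the complexification the annihilator of the highest weight tensor built from $u$ and $z$ is spanned by the two raising operators together with the unique Cartan combination on which the weights cancel, namely $e_0-3e_{\rm left}$, where $e_0$ and $e_{\rm left}$ generate the two Cartan subalgebras. Intersecting back with the compact real form discards the raising operators and leaves exactly the one-dimensional $\frak{so}(2)=\langle e_0-3e_{\rm left}\rangle$; being a real element of $\fg_0$, it annihilates the real vector $w$. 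This is entirely parallel to the curvature case, where the surviving circle was $3e_{\rm left}-e_{\rm right}$.

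The hard part is precisely this last step: one must check that the diagonal circle annihilates the real generator $w$ itself, not merely the line $\langle w\rangle$, and that the real intersection collapses to dimension one. This is where the quaternionic reality structure is used (cf.\ Remark~\ref{realtensorproj}): the two imaginary Cartan scalars cancel against one another, so no real grading correction can rescue either $\sp(1)$ separately, and a highest weight tensor of weights $(3,1)$ has no larger stabilizer in $\sp(1)\oplus\sp(1)_{\rm left}$. Maximality over all of $\V^{\rm I}_{\theta_{max}}$ then follows because every vector there shares the $\sl(n,\H)$--annihilator found above, while the outer $\sp(1)$--stabilizer is largest at a highest weight $u$. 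Forming the semidirect product with the nilradical $\fh_+$ collects the summands into
$$
\mathfrak{ann}(w)=\frak{so}(2)\oplus\bigl(\R\oplus\mathfrak{gl}(n-2,\H)\oplus\sp(1)_{\rm right}\bigr)\ltimes\fh_+,
$$
as claimed.
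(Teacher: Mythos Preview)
Your argument follows the paper's route exactly: reduce to $\V^{\rm I}_{\theta_{max}}$ via Lemmas~\ref{minorbitisclosed} and~\ref{expandclosedorbit}, invoke Lemma~\ref{nonegstab-I} for the negative part, and then read off the degree-zero annihilator from the weights on $\fh_0^{ss}$ together with the grading elements. The one place where you are more explicit than the paper is in locating the $\frak{so}(2)$: the text preceding the corollary records only that $\sp(1)_{\rm left}$ contributes nothing on its own and then adds ``one element from $\sp(1)$'', whereas you identify this element concretely as the diagonal $\langle e_0-3e_{\rm left}\rangle\subset\sp(1)\oplus\sp(1)_{\rm left}$, in precise parallel with the curvature case (cf.\ Remark~\ref{RkFlip}). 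This is a clarification rather than a different method, and your dimension count and maximality argument agree with the paper's.
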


Let us give the generators of this annihilator in the complex tensor notations.
Fixing $w=1^*_n\wedge j^*_n\otimes 1_1$ (again this element is contained in the Cartan product)
we conclude that $\mathfrak{ann}(w)$ in $\tilde\fg_0$ is generated by
the elements $v_{n,n}$ ($v=i,j,k$) and the elements
$q_{r,s}$ for $1\le r<n$, $1<s\leq n$ ($q=1,i,j,k$); if $r=s$ and $q$ is real, 
then $q_{r,s}$ is compensated by $1_{n,n}+2\cdot1_{1,1}$ to belong to $\tilde\fg_0$.
To get the annihilator in $\fg_0$ we add one element from $\sp(1)$ and the element $Z+1_{1,1}-1_{n,n}$.

Taking the semi-direct product of this annihilator and the Abelian algebra $\H^n$, we get the graded algebra $\frak{a}^{\psi_{\rm I}}$ of maximal dimension provided $0\neq\psi_{\rm I}\in\V^{\rm I}$:
 $$
\frak{a}^{\psi_{\rm I}} =
\Big(\mathfrak{so}(2)\oplus\bigl(\R\oplus\mathfrak{gl}(n-2,\H)\oplus\sp(1)_{\rm right}\bigr)\ltimes
\mathfrak{heis}(8n-12,\H)\Big)\ltimes\H^n.
 $$
This will be shown to be associated to the filtration on the symmetry algebra $\frak{s}$ of a geometry
with $\ka_1\neq0$ in the next section.

\begin{rem*}\label{RkFlip}
The annihilator algebras from Corollaries \ref{corr1} and \ref{corr2} are very similar but not isomorphic. The following is an explanation of this phenomenon. We reduce the curvature- and torsion-modules via the parabolic subalgebra $\p_{2,2n-2}$ (or $\p_2$ for $n=2$) of $\sl(n,\H)$. This yields the diagrams
\begin{align*}
& \addtolength{\spaceunderA}{0pt}
  \parbox[c][\spaceunderA][t]{1in}{\xymatrix@=\lengtharrowS@R=0pt{
                                                                           \\
     *\txt{$\scriptstyle \rule[\spaceundercoef]{0pt}{0mm} 3$}
    &*\txt{$\scriptstyle \rule[\spaceundercoef]{0pt}{0mm} -3$}
    &*\txt{$\scriptstyle \rule[\spaceundercoef]{0pt}{0mm} 0$}
    &*\txt{$\scriptstyle \rule[\spaceundercoef]{0pt}{0mm} 1$}
    &*\txt{$\scriptstyle \rule[\spaceundercoef]{0pt}{0mm} 0$}
    &*\txt{$\scriptstyle \rule[\spaceundercoef]{0pt}{0mm} 0$}
    &&*\txt{$\scriptstyle \rule[\spaceundercoef]{0pt}{0mm} 0$}
    &*\txt{$\scriptstyle \rule[\spaceundercoef]{0pt}{0mm} 0$}
    &*\txt{$\scriptstyle \rule[\spaceundercoef]{0pt}{0mm} 0$}
    &*\txt{$\scriptstyle \rule[\spaceundercoef]{0pt}{0mm} 1$}            \\
     *\txt{$\bullet$}       \sli[r]
    &*\txt{$\times$}       \sli[r]
    &*\txt{$\bullet$}       \sli[r]
    &*\txt{$\times$}       \sli[r]
    &*\txt{$\bullet$}       \sli[r]
    &*\txt{$\circ$}       \sli[r]
    &*\txt{$\, \cdots$}    \sli[r]
    &*\txt{$\circ$}       \sli[r]
    &*\txt{$\bullet$}       \sli[r]
    &*\txt{$\times$}       \sli[r]
    &*\txt{$\bullet$}
  }}
\\
&
\addtolength{\spaceunderA}{0pt}
  \parbox[c][\spaceunderA][t]{1in}{\xymatrix@=\lengtharrowS@R=0pt{
                                                                           \\
     *\txt{$\scriptstyle \rule[\spaceundercoef]{0pt}{0mm} 0$}
    &*\txt{$\scriptstyle \rule[\spaceundercoef]{0pt}{0mm} -4$}
    &*\txt{$\scriptstyle \rule[\spaceundercoef]{0pt}{0mm} 3$}
    &*\txt{$\scriptstyle \rule[\spaceundercoef]{0pt}{0mm} 0$}
    &*\txt{$\scriptstyle \rule[\spaceundercoef]{0pt}{0mm} 0$}
    &*\txt{$\scriptstyle \rule[\spaceundercoef]{0pt}{0mm} 0$}
    &&*\txt{$\scriptstyle \rule[\spaceundercoef]{0pt}{0mm} 0$}
    &*\txt{$\scriptstyle \rule[\spaceundercoef]{0pt}{0mm} 0$}
    &*\txt{$\scriptstyle \rule[\spaceundercoef]{0pt}{0mm} 0$}
    &*\txt{$\scriptstyle \rule[\spaceundercoef]{0pt}{0mm} 1$}            \\
     *\txt{$\bullet$}       \sli[r]
    &*\txt{$\times$}       \sli[r]
    &*\txt{$\bullet$}       \sli[r]
    &*\txt{$\times$}       \sli[r]
    &*\txt{$\bullet$}       \sli[r]
    &*\txt{$\circ$}       \sli[r]
    &*\txt{$\, \cdots$}    \sli[r]
    &*\txt{$\circ$}       \sli[r]
    &*\txt{$\bullet$}       \sli[r]
    &*\txt{$\times$}       \sli[r]
    &*\txt{$\bullet$}
  }}
\end{align*}
We note that the numbers above connected pieces correspond to the action of a semi-simple subalgebra, and the numbers above crosses only affects the scaling factors of the center of $\fg_0$. One can express this by the diagram with crosses removed
$$
\addtolength{\spaceunderA}{0pt}
  \parbox[c][\spaceunderA][t]{1in}{\xymatrix@=\lengtharrowS@R=0pt{
                                                                           \\
     *\txt{$\scriptstyle \rule[\spaceundercoef]{0pt}{0mm} 0$}
    &*\txt{$\scriptstyle \rule[\spaceundercoef]{0pt}{0mm} $}
    &*\txt{$\scriptstyle \rule[\spaceundercoef]{0pt}{0mm} 1$}
    &*\txt{$\scriptstyle \rule[\spaceundercoef]{0pt}{0mm} $}
    &*\txt{$\scriptstyle \rule[\spaceundercoef]{0pt}{0mm} 3$}
    &&*\txt{$\scriptstyle \rule[\spaceundercoef]{0pt}{0mm} 0$}
    &*\txt{$\scriptstyle \rule[\spaceundercoef]{0pt}{0mm} 0$}
    &*\txt{$\scriptstyle \rule[\spaceundercoef]{0pt}{0mm} $}
    &*\txt{$\scriptstyle \rule[\spaceundercoef]{0pt}{0mm} 0$}
    &*\txt{$\scriptstyle \rule[\spaceundercoef]{0pt}{0mm} 0$}            \\
     *\txt{$\bullet$}       \sli[]
    &*\txt{\ \ \ \ }       \sli[]
    &*\txt{$\bullet$}       \sli[]
    &*\txt{\ \ \ \ }       \sli[]
    &*\txt{$\bullet$}       \sli[]
    &*\txt{\ \ \ \ }    \sli[]
    &*\txt{$\bullet$}       \sli[r]
    &*\txt{$\circ$}       \sli[r]
    &*\txt{$\, \cdots$}     \sli[r]
    &*\txt{$\circ$}       \sli[r]
    &*\txt{$\bullet$}
  }}
$$
that is the same for the two modules, after a permutation. Hence the contribution from $\fg_0^{ss}$ to the annihilator must be abstractly isomorphic in the two cases. The difference then comes from the action of $\fg_0^{ss}$ on $\fh_+$.
\end{rem*}

 \section{Realizations of sub-maximal models}
In the previous section we found the annihilator algebras $\frak{a}^{\psi_{\rm I}}$ and
$\frak{a}^{\psi_{\rm II}}$ of maximal dimension that is $\mathfrak{U}=4n^2-4n+9$ in both cases.
To prove this is realizable, we follow the idea of \cite[\S4.2]{KT} and deform the graded bracket
structure on $\frak{a}$ to obtain a new filtered Lie algebra $\mathfrak{f}$. We use the real highest
weight vector in our modules (that correspond to the minus lowest weight vectors of the duals - note
that we used flip of the Satake diagram in our construction).

This is expected to correspond to the symmetry algebra of a submaximally symmetric model, which is
(non-flat) homogeneous with the isotropy being $\mathfrak{f}_{\ge0}=\mathfrak{a}_0$
(because of the prolongation-rigidity), and we show it is the case.

To do this we follow the approach in \cite[\S4.1]{KT} that allows to establish an abstract model,
basing on the extension functor construction. However we also provide explicit matrix models of the
corresponding almost quaternionic manifolds $(M,Q)$, for which the direct computation confirms the
amount of symmetry is submaximal $\mathfrak{S}=\mathfrak{U}$. We consider the curvature and
torsion cases separatetly. The corresponding theorems imply the main result of this paper.

We consider the cases when non-zero $\ka=(\ka_1,\ka_2)$ is either $(\psi_{\rm I},0)$ or $(0,\psi_{\rm II})$.
One could also question if the submaximal symmetry dimension can be achieved when both torsion and curvature
are non-zero, but even though abstractly the maximal annihilator
algebras $\frak{a}^{\psi_{\rm I}}\simeq\frak{a}^{\psi_{\rm II}}$ the discussion in Remark \ref{RkFlip}
shows that the answer to the above question is negative.

 \subsection{Realization of the curvature model}

Consider the case of non-vanishing curvature and vanishing torsion first. In order to realize the symmetry algebra, the minimal orbit in the abstract curvature module $\V^{\rm II}$ needs to be reinterpreted as a deformation of the graded algebra $\frak{a}^{\psi_{\rm II}}$. This can be done by finding a $G_0$-equivariant map
 $$
b: \V^{\rm II}=S^3_\C\H^{*n}\odot\H^{n} \rightarrow \mathbb{B}=\Lambda^2\H^{*n} \otimes \fg_0,
 $$
and using the Lie bracket deformation given by the image $b(v)$ of a generator
$0\neq v\in\V^{\rm II}$ of the minimal orbit $G_0\cdot[v]$.
 \begin{lem*}
The equivariant map $b$ exists and is unique up to scale.
 \end{lem*}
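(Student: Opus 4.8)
The plan is to prove this by decomposing both $\V^{\rm II}$ and $\mathbb{B}=\Lambda^2\H^{*n}\otimes\fg_0$ into irreducible $G_0$-modules, and showing that $\V^{\rm II}$ occurs in $\mathbb{B}$ with multiplicity exactly one. Existence and uniqueness of $b$ up to scale then follow from Schur's lemma. Since we want a $G_0$-equivariant (not just $G_0^{ss}$-equivariant) map, the grading element $Z$ must act on source and target with the same eigenvalue: the source $\V^{\rm II}$ has homogeneity $2$, and in the target $\Lambda^2\H^{*n}$ contributes homogeneity $2$ (two copies of $\fg_1$) while $\fg_0$ contributes homogeneity $0$, so the homogeneity matches automatically and we only need to track the $\fg_0^{ss}=\sp(1)\oplus\sl(n,\H)$-module structure, or equivalently, after complexification, the $\fg_0^\C$-module structure where Kostant/BBW bookkeeping applies.

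First I would write $\fg_0=\sp(1)\oplus\R Z\oplus\sl(n,\H)$ and record the relevant highest weights: with respect to $\tilde\fg_0=\sl(n,\H)$ the module $\V^{\rm II}$ is $S^3_\C\H^{*n}\odot\H^n$ of highest weight $\omega_1+3\omega_{2n-1}$ (after the flip of the Satake diagram) and is trivial under $\sp(1)$, as established just before Lemma~\ref{nonegstab-II} and in Corollary~\ref{corr1}. Next I would decompose $\Lambda^2\H^{*n}$ and $\fg_0=\sp(1)\oplus\R Z\oplus\sl(n,\H)$, then decompose the tensor product $\Lambda^2\H^{*n}\otimes\sl(n,\H)$ (the $\sp(1)$ and $\R Z$ summands of $\fg_0$ contribute modules that are, as $\sl(n,\H)$-modules, copies of $\Lambda^2\H^{*n}$, whose highest weight component is $\Lambda^2_\C\H^{*n}\odot\cdot$ — of the wrong type to contain $S^3_\C\H^{*n}\odot\H^n$). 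The decomposition of $\Lambda^2\H^{*n}\otimes_\C(\sl(n,\H)\otimes\C)$ via the complexified picture $\Lambda^2\C^{2n}\otimes\sl(2n,\C)$ is a standard (if slightly tedious) plethysm; I would identify which summands have the Cartan-product type $S^3\odot(\text{something})$ and verify that precisely one of them is isomorphic to $\V^{\rm II}\otimes\C$, and that it is self-conjugate (so that a real equivariant map exists, not merely a complex one). The reality/quaternionic bookkeeping is handled exactly as in Remark~\ref{realtensorproj} and via \cite[Proposition 4.1.8]{parabook}.

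Alternatively — and this is probably the cleaner route — I would observe that $\mathbb{B}=\Lambda^2\fg_1\otimes\fg_0$ is precisely the space of candidate curvature-type tensors of homogeneity $2$, and that the cochain space $C^2(\fg_-,\fg)_2=\Lambda^2\fg_{-1}^*\otimes\fg_0$ (same object, since $\fg_{-1}^*\cong\fg_1$ as $\fg_0$-modules) contains $H^2_2(\fg_-,\fg)=\V^{\rm II}$ as the unique harmonic representative of the cohomology in this homogeneity. Because the Hodge-theoretic splitting $C^2=\Imm\partial\oplus H^2\oplus\Imm\partial^*$ is $\fg_0$-equivariant, and because $H^2_2$ is irreducible, the inclusion $\V^{\rm II}\hookrightarrow\mathbb{B}$ is an equivariant embedding; uniqueness up to scale is then the assertion that the isotypic component of $\V^{\rm II}$ in $\mathbb{B}$ is one-dimensional over the relevant endomorphism division algebra, which for a real irreducible module of quaternionic (resp. real, resp. complex) type means the multiplicity is $1$. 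To pin down the multiplicity I would invoke Kostant's theorem on the complexified side: $H^2((\fg_\C)_-,\fg_\C)$ in homogeneity $2$ is the single irreducible $\fg_0^\C$-module with lowest weight as recorded in Section~\ref{Ssetup}, and the only way it can sit inside $\Lambda^2(\fg_\C)_1\otimes(\fg_\C)_0$ compatibly with the Hodge decomposition is with multiplicity one.

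\textbf{Main obstacle.} The technical heart is the multiplicity-one count: showing $\V^{\rm II}$ appears exactly once in $\Lambda^2\H^{*n}\otimes\fg_0$ as a real $G_0$-module. The cohomological argument reduces this to a statement about the complexified parabolic, but one must still be careful that (i) the harmonic space $H^2_2$ is genuinely irreducible over $G_0$ (true by the Kostant description, but it relies on knowing $H^2_+=H^2$ here since the geometry is $|1|$-graded and regularity is vacuous, as noted in Section~\ref{Ssetup}), and (ii) the real form does not split or fuse the relevant complex summand in a way that changes the multiplicity — this is exactly the quaternionic-type reality issue that Remark~\ref{realtensorproj} was set up to address, and it should be dispatched by the same $\text{proj}_i$, $\text{proj}_{\text{sc}}$ analysis together with \cite[Proposition 4.1.8]{parabook}. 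Once multiplicity one is in hand, Schur's lemma gives existence and uniqueness up to a real scalar, completing the proof.
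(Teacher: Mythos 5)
Your overall strategy --- reduce the lemma to a multiplicity-one statement for $\V^{\rm II}$ inside $\mathbb{B}$ and then apply Schur's lemma --- is exactly the paper's. Your first route is, in outline, the paper's proof: the paper complexifies, computes $(\Lambda^2_\R\H^{*n})^\C=3\cdot\Lambda^2_\C\H^{*n}\oplus S^2_\C\H^{*n}$, observes that the only constituent of $\mathbb{B}\otimes\C$ of highest weight $\omega_1+3\omega_{2n-1}$ is the Cartan product $S^2_\C\H^{*n}\odot\ad_{\sl(n,\H)}^\C$ (the $\Lambda^2_\C$-copies tensored with $\ad$, and everything coming from the $\sp(1)\oplus\R Z$ part of $\fg_0$, cannot produce that weight), and concludes uniqueness up to a real scalar from $\End_{\fg_0}(\V^{\rm II})=\R$. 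In your write-up this decisive count is deferred (``a standard if slightly tedious plethysm''); since that computation is essentially the entire content of the lemma, route one as it stands is a correct plan rather than a proof.

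The route you call cleaner has a genuine gap in the uniqueness step. The $\fg_0$-equivariant Hodge splitting $C^2=\Imm\partial\oplus\mathcal{H}^2\oplus\Imm\partial^*$ together with Kostant's theorem does give existence of an equivariant embedding $\V^{\rm II}\cong H^2_2\hookrightarrow C^2(\fg_{-1},\fg)_2=\mathbb{B}$, which is a nice conceptual observation. But it says nothing about whether further copies of the same isomorphism type occur inside $\Imm\partial$ or $\Imm\partial^*$ in homogeneity $2$: Kostant controls the cohomology, not the isotypic multiplicities of the full cochain space, so ``the only way it can sit inside \ldots\ compatibly with the Hodge decomposition is with multiplicity one'' is an assertion, not an argument. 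To close it you would have to check in addition that the homogeneity-$2$ parts of $C^1$ and $C^3$, namely $\fg_1\otimes\fg_1$ and $\Lambda^3\fg_1\otimes\fg_{-1}$, contain no constituent of highest weight $\omega_1+3\omega_{2n-1}$ (so that $\Imm\partial$ and $\Imm\partial^*$, being quotients of these, cannot contribute) --- at which point you are back to the explicit weight computation of route one, i.e.\ to the paper's argument.
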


 \begin{proof}
The real module $\mathbb{B}$ is completely reducible, and we compute its decomposition into irreducible submodules
by finding an $\sl(n,\H)$-invariant real subspace in the complexification after applying standard methods from
the complex representation theory of $A_{2n-1}=\sl(n,\H)\otimes\C$. We have (in complexification $\H^n=\C^{2n}$):
 $$
(\Lambda^2_\R\H^{*n})^\C = \Lambda^2_\C(2\cdot\H^{*n}) = 3\cdot\Lambda^2_\C\H^{*n}\oplus S^2_\C \H^{*n}.
 $$
The Cartan product $S^2_\C\H^{*n}\odot\ad_{\sl(n,\H)}^\C$ has the same highest weight as $\V^{\rm II}$,
and so is isomorphic to it as a complex $A_{2n-1}$-module, with the isomorphism mapping the $\sl(n,\H)$-invariant real submodules into each other. This is the unique submodule in $\mathbb{B}$ of the required isomorphism type,
so the map $b$ is defined and is unique up to scalar multiplication (since $\End_{\fg_0}(\V^{\rm II})=\R$,
this scalar is a real number).
 \end{proof}

We construct $b$ in the complex tensor notations as in the previous section.
 \begin{prop*}
The bracket deformation on an extremal generator $w\in\V^{\rm II}_{\theta_{max}}$,
corresponding to a minimal orbit $G_0\cdot[w]$ in $P\V^{\rm II}$, is given by the formula:
 \begin{align*}
b(w) = &(i_n^\ast \wedge j_n^\ast-1_n^\ast \wedge k_n^\ast)\otimes(i_n^\ast \otimes 1_1-1_n^\ast \otimes j_1+j_n^\ast \otimes k_1-k_n^\ast \otimes j_1)\\
-&(1_n^\ast \wedge j_n^\ast+i_n^\ast \wedge k_n^\ast)\otimes(1_n^\ast \otimes 1_1+i_n^\ast \otimes i_1+j_n^\ast \otimes j_1+k_n^\ast \otimes k_1).
 \end{align*}
 \end{prop*}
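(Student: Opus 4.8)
The plan is to explicitly produce the equivariant map $b$ from the previous lemma in tensor notation and then verify that the displayed formula has the right properties. Since the lemma has already established existence and uniqueness up to scale, it suffices to exhibit \emph{one} nonzero element of the correct isomorphism type $S^2_\C\H^{*n}\odot\ad^\C_{\sl(n,\H)}$ sitting inside $\mathbb{B}=\Lambda^2\H^{*n}\otimes\fg_0$, and to check that it is the image of the extremal generator $w=1_n^{*3}\otimes 1_1$ (equivalently, its self-conjugate real form from Remark \ref{realtensorproj}). Concretely, I would first recall that $b(w)$ must lie in the real $\sl(n,\H)$-submodule of $\Lambda^2\H^{*n}\otimes\fg_0$ isomorphic to $\V^{\rm II}$; under the block decomposition $\fg_0=\sp(1)+\R Z+\sl(n,\H)$ only the $\ad_{\sl(n,\H)}$ summand contributes (the $\sp(1)$ and $Z$ summands would produce modules of the wrong highest weight), so $b(w)\in\Lambda^2\H^{*n}\otimes\sl(n,\H)$.

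The key computational step is to identify the $Z'$-weight of $w$ and to match $b(w)$ componentwise. Since $w$ is a $\theta_{max}$-eigenvector annihilated by $\fh_+$, its image $b(w)$ must be annihilated by $\fh_+$ as well (by equivariance) and lie in the top $Z'$-graded piece of $\Lambda^2\H^{*n}\otimes\sl(n,\H)$. This forces the $\Lambda^2\H^{*n}$-factors to be built out of $1_n^*,i_n^*,j_n^*,k_n^*$ only (the entries dual to the last basis vector $v_n$), and the $\sl(n,\H)$-factor to be of the form $(\text{covector in the }n\text{-slot})\otimes(\text{vector }v_1\text{ in the first slot})$, i.e.\ of type $(\ast_n)\otimes(\cdot_1)$. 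Writing the most general such element with undetermined quaternionic coefficients and imposing (i) the $\sp(1)_{\rm left}\oplus\sp(1)_{\rm right}$-equivariance matching that of $w$ (whose $\sp(1)^2$-type is $\omega_1+3\omega_2$ as in the lemma on orbit dimension), (ii) invariance under the residual $\sl(n-2,\H)$, and (iii) that the Cartan-product contractions vanish, pins down the two-term expression displayed in the proposition up to the overall scale. I would present this as: plug $b(w)$ as written into the $\sp(1)^2$-action and check it transforms as $w$ does; since $\End_{\fg_0}(\V^{\rm II})=\R$ this is enough.

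The main obstacle I anticipate is purely bookkeeping: keeping the quaternionic (non-commutative) multiplications straight when acting by $\sp(1)_{\rm left}$ and $\sp(1)_{\rm right}$ on the two tensor slots, and correctly handling the "minus transpose" convention for the action on $\H^{*n}$, so that the signs and the pairing $i_n^*\wedge j_n^* - 1_n^*\wedge k_n^*$ versus $1_n^*\wedge j_n^* + i_n^*\wedge k_n^*$ come out consistently. A secondary subtlety is the interplay between the complex-tensor shorthand and the genuine real tensor (Remark \ref{realtensorproj}): one should check that applying $\text{proj}_{\text{sc}}\circ\text{proj}_i$ to both sides is compatible, i.e.\ that $b$ as constructed really descends to the real modules rather than only to their complexifications. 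Once those sign/convention issues are settled, verifying the two displayed lines is a finite check: act on $b(w)$ by the annihilator generators listed in Corollary \ref{corr1} (the $q_{r,s}$ for $1\le r<n$, $1<s\le n$, together with $i_{n,n}+3i_{1,1}$ and the grading combination), confirm they annihilate it, act by a representative lowering operator to see the image is nonzero, and conclude by equivariance plus Schur that $b(w)$ is the claimed bracket deformation up to the inessential scale.
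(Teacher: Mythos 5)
Your plan coincides with the paper's (largely implicit) argument: the paper likewise obtains the formula by constructing the unique-up-to-scale equivariant map $b$ in complex tensor notation and simply recording its value on the extremal generator $w$, with no proof beyond the preceding existence–uniqueness lemma. Your verification scheme — place the candidate in the unique submodule of $\Lambda^2\H^{*n}\otimes\fg_0$ of type $S^2_\C\H^{*n}\odot\ad^\C_{\sl(n,\H)}$, check annihilation by the annihilator of $w$ and the matching $Z'$-weight, then invoke Schur's lemma to fix it up to the inessential scale — is exactly the computation the paper leaves to the reader, so the proposal is correct and essentially the same approach.
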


Define the deformed Lie bracket on the space of $\mathfrak{a}^{\psi_{\rm II}}$ via $b(w)$:
 $$
[\,,\,]_{\frak{f}_{\rm II}}=[\,,\,]_{\frak{a}^{\psi_{\rm II}}}+b(w)(\,,\,).
 $$
Similarly to \cite[Lemma 4.1.1]{KT} one can check that this is a Lie bracket (the Jacobi identity holds), and
the space $\mathfrak{a}^{\psi_{\rm II}}$ equipped with it is a new (now filtered) Lie algebra $\frak{f}_{\rm II}$.

This deformation changes the Lie brackets of the previously Abelian subalgebra $\H^n$, and this subspace
$\H_n\subset \H^n$ becomes non-Abelian. The new bracket component takes values in the center of the subalgebra $\mathfrak{heis}(8n-12,\H)$:
 $$
[\H_n,\H_n]_{\frak{f}_{\rm II}}\subset \frak{z}(\mathfrak{heis}(8n-12,\H)).
 $$
Notice that the semi-simple Levi factor of the symmetry algebra is unchanged by this deformation:
$\mathfrak{f}_{\rm II}^{ss}=(\frak{a}^{\psi_{\rm II}})^{ss}=\sp(1)\oplus\sl(n-2,\H)$.
Due to the presence of the subalgebra $\sl(n-2,\H)\ltimes \H^{n-2}$ in $\mathfrak{f}_{\rm II}$,
a sub-maximal model can be realized as a direct product (although this notion is coordinate dependent) of
the flat structure $\H^{n-2}$ and a sub-maximal structure of dimension $2$.

Although the symmetry algebra is not solvable, its solvable radical acts locally transitively,
which allows us to integrate the algebra and produce a coordinate description of the model.
For $n=2$ the operator $I$ on $\H^2=\R^8(h_1,\ldots,h_8)$ is given by the matrix
 $$
I=\begin{pmatrix}
A_I & C_I \\ 0 & B_I
\end{pmatrix},\,\, A_I=\begin{pmatrix}
0 & -1 & 0 &0 \\
1 & 0 & 0 &0 \\
0 & 0 & 0 &1 \\
0 & 0 & -1 &0
\end{pmatrix}
 $$
 $$
B_I=\frac{1}{\alpha^2}\begin{pmatrix}
0 & 2h_2^2-\alpha^2 & -2\,h_2\,h_4 &2\,h_2\,h_3 \\
\alpha^2-2h_2^2 & 0 & 2\,h_2\,h_3 &2\,h_2\,h_4 \\
2\,h_2\,h_4 & -2\,h_2\,h_3 & 0 &2h_2^2-\alpha^2 \\
-2\,h_2\,h_3 & -2\,h_2\,h_4 & \alpha^2-2h_2^2 &0
\end{pmatrix}
 $$
 \begin{align*}
C_I^t=&\frac{1}{2\alpha^2}\begin{pmatrix}
0 & h_2(2\alpha^2-3h_2^2) & h_3(\alpha^2-3h_2^2) & h_4(\alpha^2-3h_2^2)\\
h_2(4\alpha^2-3h_2^2) & 0 & -h_4(h_2^2+\alpha^2) & h_3(h_2^2+\alpha^2) \\
h_4(3h_2^2-\alpha^2) & h_3(3h_2^2+\alpha^2) & h_2(3h_3^2+h_4^2) & 2h_2h_3h_4 \\
h_3(\alpha^2-3h_2^2) & h_4(3h_2^2+\alpha^2) & 2h_2h_3h_4 & h_2(h_3^2+3h_4^2)
\end{pmatrix} \\
&\!\!\!+\frac{1}{\alpha^2}\begin{pmatrix}
h_2 h_5+h_3 h_7+h_4 h_8 & -h_2 h_6-h_3 h_8+h_4 h_7 & 0 & 0\\
h_2 h_6-h_3 h_8+h_4 h_7 & h_2 h_5-h_3 h_7-h_4 h_8 & 0 & 0\\
h_2 h_7-h_3 h_5-h_4 h_6 & -h_2 h_8+h_3 h_6-h_4 h_5 & 0 & 0\\
h_2 h_8+h_3 h_6-h_4 h_5 & h_2 h_7+h_3 h_5+h_4 h_6 & 0 & 0
\end{pmatrix}
 \end{align*}
(note the transpose). Here $\alpha^2=h_2^2+h_3^2+h_4^2$. The operator $J$ is given by
 $$
J=\begin{pmatrix}
A_J & C_J \\ 0 & B_J
\end{pmatrix},\,\, A_J=\begin{pmatrix}
0 & 0 & 1 & 0\\0 & 0 & 0 & 1\\-1 & 0 & 0 & 0\\0 & -1 & 0 & 0
\end{pmatrix}
 $$
 $$
B_J=\frac{1}{\alpha^2}\begin{pmatrix}
0 & -2h_2h_3 & 2h_3h_4 & \alpha^2-2h_3^2 \\
2h_2h_3 & 0 & \alpha^2-2h_3^2 & -2h_3h_4 \\
-2h_3h_4 & 2h_3^2-\alpha^2 & 0 & -2h_2h_3 \\
2h_3^2-\alpha^2 & 2h_3h_4 & 2h_2h_3 & 0
\end{pmatrix}
 $$
 \begin{align*}
C_J^t=&\frac{1}{4\alpha^2}\begin{pmatrix}
3h_4\alpha^2 & h_3(6h_2^2-\alpha^2) & h_2(6h_3^2-\alpha^2) & 6h_2h_3h_4 \\
h_3(6h_2^2-5\alpha^2) & -3h_4\alpha^2 & 2h_2h_3h_4 & h_2(3\alpha^2-2h_3^2) \\
-6h_2h_3h_4 & 3h_2(\alpha^2-2h_3^2) & h_3(\beta^2-4h_3^2) & h_4(3\alpha^2-4h_3^2) \\
h_2(6h_3^2-\alpha^2) & -6h_2h_3h_4 & -h_4(3\alpha^2+4h_3^2) & h_3(\beta^2-4h_4^2)
\end{pmatrix}\\
 &\!\!\!+\frac{1}{\alpha^2}\begin{pmatrix}
h_2h_7-h_3h_5+h_4h_6 & 0 & h_2h_6+h_3h_8-h_4h_7 & 0 \\
-h_2h_8-h_3h_6-h_4h_5 & 0 & -h_2h_5+h_3h_7+h_4h_8 & 0 \\
-h_2h_5-h_3h_7+h_4h_8 & 0 & h_2h_8-h_3h_6+h_4h_5 & 0 \\
h_2h_6-h_3h_8-h_4h_7 & 0 & -h_2h_7-h_3h_5-h_4h_6 & 0
\end{pmatrix}
 \end{align*}
(note the transpose). Here $\beta^2=h_2^2-h_3^2-h_4^2$. Then we let $K=IJ$.

To get the quaternionic structure for general quaternionic dimension $n$,
re-denote the above operators for $n=2$ by $I_{(2)}$ and $J_{(2)}$.
Now let $I$ be given as the block matrix with $I_{(2)}$ in the top $8\times8$ block,
$A_I$ on the following diagonal $4\times4$ blocks and zeroes elsewhere.
Similarly let $J$ be given as the block matrix with $J_{(2)}$ in the top $8\times8$ block,
$A_J$ on the following diagonal $4\times4$ blocks and zeroes elsewhere.
Define $K=IJ$. Denote the obtained quaternionic structure $(I,J,K)$ by $Q_{\rm II}$.

 \begin{thm*}\label{ThmCQ}
The quaternionic structure $(M,Q_{\rm II})$ has symmetry algebra $\mathfrak{s}_{\rm II}$ of
submaximal dimension $\mathfrak{S}_2=4n^2-4n+9$.
 \end{thm*}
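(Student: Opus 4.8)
The plan is to verify the lower bound $\mathfrak{S}_2\ge 4n^2-4n+9$ directly for the explicit structure $Q_{\rm II}$, since the matching upper bound $\mathfrak{S}_2\le\mathfrak{U}_2=4n^2-4n+9$ is already established by the computation of the annihilator algebra $\frak{a}^{\psi_{\rm II}}$ in the previous section together with the prolongation-rigidity of almost quaternionic structures. Thus it suffices to exhibit a Lie algebra of infinitesimal symmetries of the prescribed dimension, and then check that it cannot be any larger. First I would observe that by construction $Q_{\rm II}$ is obtained by adjoining to the $n=2$ structure $(I_{(2)},J_{(2)})$ a flat factor carved out by $(A_I,A_J)$ on the remaining $\H^{n-2}$; correspondingly the symmetry algebra will be (an extension of) $\mathfrak{s}_{\rm II}^{(2)}\oplus(\sl(n-2,\H)\ltimes\H^{n-2})$ together with the cross terms coming from $\mathfrak{heis}(8n-12,\H)$, exactly matching the filtered algebra $\mathfrak{f}_{\rm II}$ produced by the bracket deformation $b(w)$. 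So the core of the proof is the case $n=2$: show that the explicit $\R^8$-structure with $I,J$ as given has an $8$-dimensional (for $n=2$: $4\cdot4-4\cdot2+9=17$) symmetry algebra, i.e. $\dim\mathfrak{s}_{\rm II}^{(2)}=17$, isomorphic to the filtered algebra $\mathfrak{f}_{\rm II}$ restricted to $n=2$.

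The concrete verification proceeds as follows. A vector field $X$ on $\R^8$ is an infinitesimal automorphism of $Q$ iff $\mathcal{L}_X$ preserves the rank-three bundle $Q=\langle I,J,K\rangle$, i.e. $\mathcal{L}_X I,\ \mathcal{L}_X J,\ \mathcal{L}_X K\in\langle I,J,K\rangle$ pointwise (equivalently $[\mathcal{L}_X,A]\in\langle I,J,K\rangle$ as operator-valued equations, using $\mathcal{L}_X A=[X,A\cdot]-A[X,\cdot]$). Writing $X=\sum_{m=1}^8 f_m(h)\,\partial_{h_m}$ and plugging in the explicit matrices for $I,J$ (hence $K=IJ$), one obtains a determined overdetermined PDE system on the $f_m$. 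I would integrate this system: first extract the leading (constant and linear) symmetries, which should reproduce the grading pieces $\fg_{-1}=\H^2$, the reductive part $\frak{so}(2)\oplus\R\oplus\sp(1)$, and then the "deformed" part coming from $\fh_+=\mathfrak{heis}(4,\H)$ whose presence is detected by the non-polynomial ($1/\alpha^2$) terms in $B_I,C_I,B_J,C_J$. Counting: $\dim\fg_{-1}=8$, $\dim(\frak{so}(2)\oplus\R\oplus\sp(1))=1+1+3=5$, $\dim\mathfrak{heis}(8n-12,\H)|_{n=2}=\dim\mathfrak{heis}(4,\H)=4$ (it is $\H^1\oplus\H^1$ with one-dimensional center already inside, here $8n-12=4$ so two $\H$'s but the last-column/first-row overlap at the corner gives $2\cdot 4-4=4$), and $\dim\sl(0,\H)=0$, totalling $8+5+4=17$. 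The same count in general $n$ gives $4n+\bigl(1+1+3+4(n-2)^2+(4(n-2)^2+8(n-2)+3)\bigr)$-type bookkeeping, which must be reorganized to land on $4n^2-4n+9$; the cleanest route is to note $\dim\mathfrak{f}_{\rm II}=\dim\frak{a}^{\psi_{\rm II}}$ since a bracket deformation does not change the underlying vector space, and $\dim\frak{a}^{\psi_{\rm II}}$ was already computed to be $4n^2-4n+9$.

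The main obstacle is the explicit integration of the automorphism PDE system for the $n=2$ model and confirming there are no \emph{extra} symmetries beyond the $17$ coming from $\mathfrak{f}_{\rm II}$ — in other words, that the realized symmetry algebra does not accidentally jump above the abstract upper bound (it cannot, by $\mathfrak{S}\le\mathfrak{U}$, so really one only needs to verify it does not fall \emph{below} $17$, i.e. that all $17$ candidate vector fields genuinely integrate the overdetermined system with the given non-polynomial coefficients). I would organize this by exhibiting each of the $17$ generators explicitly as a vector field on $\R^8$ (the flat $\fg_{-1}$-translations are immediate; the reductive generators act linearly; the subtle ones are the four $\mathfrak{heis}(4,\H)$-generators, which will have the tell-tale quadratic-in-$h$ form dictated by $b(w)$), and then checking $\mathcal{L}_X Q\subset Q$ for each — a finite, if tedious, linear-algebra verification that the stated $I,J,K$ were engineered (via the extension-functor/$b(w)$ construction) precisely to make true. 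For general $n$ one then checks that the block-matrix extension keeps all $17$ symmetries of the top block, gains the full $\sl(n-2,\H)\ltimes\H^{n-2}$ acting on the flat factor, and gains the cross-term Heisenberg directions, so that the symmetry algebra is exactly $\mathfrak{f}_{\rm II}$ and $\dim\mathfrak{s}_{\rm II}=\dim\mathfrak{f}_{\rm II}=\mathfrak{U}_2=4n^2-4n+9$; together with $\mathfrak{S}_2\le\mathfrak{U}_2$ this yields $\mathfrak{S}_2=4n^2-4n+9$ and in particular that this structure is not flat (its harmonic curvature is the nonzero $\psi_{\rm II}$), completing the proof.
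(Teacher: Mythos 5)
Your overall architecture (exhibit a realization attaining the bound, then cap with the upper bound from the annihilator computation) is the same as the paper's, but your route to the lower bound is different: the paper does not integrate the automorphism PDE system for the explicit matrices. Its proof invokes the extension-functor construction of \cite[Lemma 4.1.4]{KT}, which produces a regular normal parabolic geometry whose symmetry algebra contains the filtered deformation $\mathfrak{f}_{\rm II}$ \emph{by construction}, and whose harmonic curvature at a point is the prescribed element, so non-flatness of type $\kappa_2\neq0$ is automatic; the explicit matrices are only the coordinate description of that model (obtained by integrating the locally transitive solvable radical), and the direct Maple computation is mentioned as confirmation, not as the proof.

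The genuine gap in your version lies in the capping step. You say the realized algebra ``cannot jump above'' the bound because $\mathfrak{S}\le\mathfrak{U}$, but that inequality applies only to structures with $\kappa_H\not\equiv0$. Exhibiting $4n^2-4n+9$ independent symmetries of the explicit $Q_{\rm II}$ does not exclude that the structure is flat --- a flat structure admits all of these symmetries and more ($4n^2+8n+3$ of them) --- so without an independent verification that $\kappa_2\not\equiv0$ (and $\kappa_1\equiv0$, so that the model is genuinely quaternionic and witnesses $\mathfrak{S}_2$ rather than merely some symmetry count) your conclusion $\dim\mathfrak{s}_{\rm II}=4n^2-4n+9$ does not follow. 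You assert that ``its harmonic curvature is the nonzero $\psi_{\rm II}$'', but within your direct-computation framework this itself requires proof: either compute the Weyl curvature of an Oproiu connection of the explicit model and check it is nonvanishing, or fall back on the extension-functor construction where the curvature is prescribed --- which is exactly the paper's argument and which then renders the PDE integration unnecessary. Two smaller points: your general-$n$ bookkeeping is garbled (the Heisenberg part has real dimension $8n-12$, not a quadratic expression), although your fallback $\dim\mathfrak{f}_{\rm II}=\dim\frak{a}^{\psi_{\rm II}}=4n^2-4n+9$ is correct; and the verification that your candidate vector fields actually satisfy $\mathcal{L}_X Q_{\rm II}\subset Q_{\rm II}$ is only promised, not carried out, whereas in the paper's logic this containment is guaranteed by construction.
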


 \begin{proof}
The proof of \cite[Lemma 4.1.4]{KT} gives the abstract parabolic model via
the extension functor construction. The symmetry algebra $\mathfrak{s}$ of this model contains
(by construction) the deformed algebra $\mathfrak{f}_{\rm II}$ constructed above.
Thus we already have at least $\mathfrak{S}_2$ symmetries. Since this coincides with the universal upper
bound $\mathfrak{U}_2=\mathfrak{U}$, there can be no more symmetries:
$\mathfrak{s}_{\rm II}=\mathfrak{f}_{\rm II}$.
 \end{proof}


\begin{rem*}
There is a reductive decomposition $\frak{s}=\frak{h}+\frak{m}$, where $\frak{m}=\frak{g}_{-1}=\H^n$. Moreover, we have $[\frak{m},\frak{m}] \subset \frak{h}$. Thus $(\frak{h}, \frak{m})$ is a symmetric pair. This reflects the fact that the submaximally symmetric quaternionic geometry is a locally affine symmetric space in the sense of \cite{N}. Direct computations (in Maple's DifferentialGeometry package, for $n=2$) gives locally a unique quaternionic invariant connection and this connection has vanishing torsion and parallel curvature. Thus the connection corresponds to the canonical connection on the symmetric space for the pair $(\frak{h}, \frak{m})$. This local connection is hypercomplex for the above $I,J,K$, and it is the unique Obata connection \cite{Alekseevsky}. Because the structure is torsion--free, the connection is also one of the Oproiu connections and determines the class of Oproiu connections on the submaximal model. Moreover, the connection is Ricci--flat and its curvature coincides with the quaternionic Weyl curvature (and in particular it is harmonic). However, Oproiu connections in the class are not Ricci-flat in general. Finally, direct computation shows that there is no invariant metric 
(of any signature) on the submaximal model.
\end{rem*}

 \subsection{Realization of the torsion model}
The case of non-vanishing torsion and vanishing curvature can be treated similarly.
In this case, we immediately interpret the element
 $$
w=1_n^*\wedge j^*_n\otimes 1_1
 $$
as a deformation to the graded algebra $\frak{a}^{\psi_{\rm I}}$, so the deformed Lie bracket on the space of
$\mathfrak{a}^{\psi_{\rm I}}$ is
 $$
[\,,\,]_{\frak{f}_{\rm I}}=[\,,\,]_{\frak{a}^{\psi_{\rm I}}}+w(\,,\,).
 $$
As in the curvature case, the previously Abelian subalgebra $\H^n$ will become non-Abelian, but in this case we have
 $$
[\H_1,\H_1]_{\frak{f}_{\rm I}}\subset\H_n\subset \H^n,
 $$
which means that $\H^n$ remains a subalgebra. It is a nilpotent ideal of the algebra $\frak{f}_{\rm I}$
acting locally transitively on the corresponding (local) homogeneous model $F/K$ (cf.\ \cite[Lemma 4.1.4]{KT}).
Therefore, the minimal model is locally equivalent to a left invariant structure on the nilpotent Lie group
corresponding to $(\H^n,[\,,\,]_{\frak{f}_{\rm I}})$.

As in the curvature case, the semi-simple Levi factor of the symmetry algebra is unchanged by the deformation, and due to the presence of $\sl(n-2,\H)$ the model can once again be realized as a direct product of a submaximal structure in quaternionic dimension $2$ and a flat structure in quaternionic dimension $n-2$.

The matrices for $n=2$ in the torsion case turn out to be considerably simpler than in the curvature case.
Namely, the operators $I$ and $J$ on $\H^2=\R^8(h_1,\ldots,h_8)$ are given by the following matrices
and $IJ=K$.
 $$
I=\begin{pmatrix}
0 & -1 &  0 & 0 & 0   &  h_7 & 0  & 0 \\
1 &  0 &  0 & 0 & h_7 &  0   & 0  & 0 \\
0 &  0 &  0 & 1 & 0   &  0   & 0  & 0 \\
0 &  0 & -1 & 0 & 0   &  0   & 0  & 0 \\
0 &  0 &  0 & 0 & 0   & -1   & 0  & 0 \\
0 &  0 &  0 & 0 & 1   & 0    & 0  & 0 \\
0 &  0 & 0  & 0 & 0   & 0    & 0  & 1 \\
0 &  0 & 0  & 0 & 0   & 0    & -1 & 0
\end{pmatrix}
 $$
 $$
J=\begin{pmatrix}
0  &  0 & 1 & 0 & 0    & 0  & -h_7 & 0 \\
0  &  0 & 0 & 1 & 0    & 0  & 0    & 0 \\
-1 &  0 & 0 & 0 & -h_7 & 0  & 0    & 0 \\
0  & -1 & 0 & 0 & 0    & 0  & 0    & 0 \\
0  &  0 & 0 & 0 & 0    & 0  & 1    & 0 \\
0  &  0 & 0 & 0 & 0    & 0  & 0    & 1 \\
0  &  0 & 0 & 0 & -1   & 0  & 0    & 0 \\
0  &  0 & 0 & 0 & 0    & -1 & 0    & 0
\end{pmatrix}
 $$

To get the quaternionic structure for general quaternionic dimension $n$,
we again extend the above $I$ by the $4\times4$ block-matrices that form the block-diagonal
of the $8\times8$ matrix, and do similarly for $J$; then we define $K=IJ$.
Denote the obtained quaternionic structure $(I,J,K)$ by $Q_{\rm I}$.

 \begin{thm*}
The quaternionic structure $(M,Q_{\rm I})$ has symmetry algebra $\mathfrak{s}_{\rm I}$ of
submaximal dimension $\mathfrak{S}_1=4n^2-4n+9$.
 \end{thm*}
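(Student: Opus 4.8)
The plan is to mirror the argument used for Theorem~\ref{ThmCQ}, now for the torsion component $\ka_1$. The abstract half is immediate. First, as in \cite[Lemma~4.1.1]{KT}, one checks that the deformed bracket $[\,,\,]_{\mathfrak{f}_{\rm I}}=[\,,\,]_{\mathfrak{a}^{\psi_{\rm I}}}+w(\,,\,)$ satisfies the Jacobi identity, so that $\mathfrak{f}_{\rm I}$ is a genuine filtered Lie algebra of dimension $\dim\mathfrak{a}^{\psi_{\rm I}}=4n^2-4n+9$; this reduces to the $\fg_0$-equivariance of $w\in\V^{\rm I}$ as a cochain together with vanishing of the quadratic obstruction, exactly as in the curvature case. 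Then the extension functor construction of \cite[\S4.1]{KT}, as in the proof of \cite[Lemma~4.1.4]{KT}, produces from $\mathfrak{f}_{\rm I}$ with isotropy subalgebra $\mathfrak{a}^{\psi_{\rm I}}_0$ a regular normal parabolic geometry of type $PGL(n+1,\H)/P$ whose infinitesimal symmetry algebra $\mathfrak{s}_{\rm I}$ contains $\mathfrak{f}_{\rm I}$ by construction; this already gives at least $4n^2-4n+9$ symmetries. Since the previous section established $\mathfrak{S}_1\le\mathfrak{U}_1=\mathfrak{U}=4n^2-4n+9$, no more symmetry is possible, hence $\mathfrak{s}_{\rm I}=\mathfrak{f}_{\rm I}$ has dimension exactly $4n^2-4n+9$.

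It then remains to recognise this abstract model inside the explicit matrix structure $(M,Q_{\rm I})$. The difference from the curvature case is that here $\H^n\subset\mathfrak{f}_{\rm I}$ is a nilpotent subalgebra --- because $[\H_1,\H_1]_{\mathfrak{f}_{\rm I}}\subset\H_n\subset\H^n$ --- acting locally simply transitively, so the model is locally a left-invariant almost quaternionic structure on the simply connected nilpotent Lie group with Lie algebra $(\H^n,[\,,\,]_{\mathfrak{f}_{\rm I}})$. In exponential coordinates $(h_1,\dots,h_{4n})$ one reads off left-invariant operators from the deformed brackets; using the subalgebra $\sl(n-2,\H)\ltimes\H^{n-2}$ of $\mathfrak{f}_{\rm I}$ one splits off a flat quaternionic factor $\H^{n-2}$, so the whole problem reduces to the $n=2$ block, which is exactly the displayed $8\times 8$ matrices $I,J$ (the entries linear in $h_7$ encoding the single nontrivial deformed bracket), extended block-diagonally by the constant $4\times 4$ blocks. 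One verifies directly that $I^2=J^2=-\Id$ and $IJ=-JI$, so $Q_{\rm I}=\langle I,J,K=IJ\rangle$ is a well-defined almost quaternionic structure, and that its algebra of local automorphisms has dimension $4n^2-4n+9$.

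Finally, to make the bound $\mathfrak{U}_1$ the relevant one and to exhibit $Q_{\rm I}$ as genuinely non-flat of the right type, one must confirm that the harmonic curvature $\ka_H=(\ka_1,\ka_2)$ of $(M,Q_{\rm I})$ has $\ka_2\equiv0$ while $\ka_1$ is nonzero and $\fg_0$-conjugate to $\psi_{\rm I}$. Concretely this means computing the structure torsion $T_Q$ of an Oproiu connection (via Alekseevsky's formula) and its Weyl part --- for $n=2$ by a direct computation, e.g.\ in Maple, and in general via the product structure --- and checking that the result lies in the single irreducible component $H^2_1(\fg_{-1},\fg)$ and is nonzero there. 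I expect this last step --- keeping track of the real tensor types and normalizations so that the invariantly defined harmonic curvature of the matrix model is pinned down to $H^2_1$ --- to be the main obstacle. Once it is in place the count is forced, since $4n^2-4n+9=\dim\mathfrak{f}_{\rm I}\le\dim\mathfrak{s}_{\rm I}\le\mathfrak{S}_1\le\mathfrak{U}_1=4n^2-4n+9$, so all inequalities are equalities.
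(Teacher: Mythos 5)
Your proposal follows essentially the same route as the paper: deform the graded algebra $\mathfrak{a}^{\psi_{\rm I}}$ by $w$, invoke the extension functor construction of \cite[\S4.1]{KT} so that the resulting model's symmetry algebra contains $\mathfrak{f}_{\rm I}$, and sandwich with the upper bound $\mathfrak{U}_1=\mathfrak{U}=4n^2-4n+9$ (the paper's proof is literally ``mimic the curvature case''), with the matrix model identified as the left-invariant structure on the nilpotent group integrating $(\H^n,[\,,\,]_{\mathfrak{f}_{\rm I}})$. The only point where you diverge is the final ``main obstacle'': verifying that the harmonic curvature lies in $H^2_1$ and is nonzero does not require a separate tensor computation, since the extension functor construction produces a regular normal geometry whose harmonic curvature is exactly the prescribed deformation $\psi_{\rm I}\in H^2_1$, so this is built in rather than checked afterwards.
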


 \begin{proof}
The proof mimics that of Theorem \ref{ThmCQ}, and we conclude $\mathfrak{S}_1=\mathfrak{U}_1=\mathfrak{U}$,
and therefore $\mathfrak{s}_{\rm I}=\mathfrak{f}_{\rm I}$.
 \end{proof}

 \begin{rem*}
The submaximally symmetric almost quaternionic geometry of torsion type is locally representable as a group. Such structures always have at least a one-parameter family of invariant connections \cite{N}. Direct computations (in Maple's DifferentialGeometry package) shows that (for $n=2$) there is a six-parameter family of invariant connections, each with vanishing curvature and parallel torsion. However, only a two-parametric sub-family is quaternionic, and all the invariant quaternionic connections are hypercomplex. Among all these connections, we can find exactly one connection such that its torsion coincides with the structure torsion of the hypercomplex structure, and this is the Obata connection of the hypercomplex structure \cite{Alekseevsky}. However, the structure torsion of the hypercomplex structure differs from that of the almost quaternionic structure, because the submaximal model has non-vanishing (harmonic) torsion. There is no invariant quaternionic connection such that its torsion coincides with the structure torsion of the almost quaternionic structure. Thus, no Oproiu connection is invariant. Clearly, the class of Oproiu connections is invariant, but unlike the quaternionic submaximal model (torsion--free with curvature), there is no fixed point in the class.
 \end{rem*}

\smallskip

\begin{rem*}
Suppose the almost quaternionic structure $Q$ is induced by an almost hypercomplex structure $I,J,K$. Then the hypercomplex symmetry algebra consists of the quaternionic symmetries that preserve each of $I,J,K$ by itself. In particular, the almost hypercomplex structure inducing the sub-maximal
quaternionic structure $Q_{\rm II}$ has hypercomplex symmetry algebras of dimension $4n^2-4n+6$,
while that for the almost quaternionic structure $Q_{\rm I}$ has that dimension $4n^2-4n+8$.
\end{rem*}

\smallskip

%
%
%

\end{document}